\theoremstyle{plain}
  \newtheorem{theorem}{Theorem}[section]
  \newtheorem*{theorem*}{Theorem}
  \newtheorem{proposition}{Proposition}[section]
  \newtheorem*{proposition*}{Proposition}
  \numberwithin{equation}{section}
\theoremstyle{remark}
  \newtheorem{remark}{\textbf{Remark}}[]
  \newtheorem*{c-example}{Counter-example}
\theoremstyle{definition}
\newcommand{\ve}{\varepsilon}
\newcommand{\eg}{\emph{e.g. }}
\newcommand{\ie}{\emph{i.e. }}
\newcommand{\RR}{\mathbb{R}}
\newcommand{\NN}{\mathbb{N}}
\newcommand{\ZZ}{\mathbb{Z}}
\newcommand{\ff}{\hat f}
\newcommand{\be}{\begin{equation}}
\newcommand{\ee}{\end{equation}}
\newcommand{\bd}{\begin{displaymath}}
\newcommand{\ed}{\end{displaymath}}
\newcommand{\ba}{\begin{eqnarray}}
\newcommand{\ea}{\end{eqnarray}}
\newcommand{\fer}[1]{(\ref{#1})}
\author[Th. Rey]{Thomas Rey}
\address{
    Thomas Rey \\
    The University of Maryland \\
    CSCAMM \\
    4146 CSIC Building \\
    Paint Branch Drive \\
    College Park, MD 20740 \\
    USA
}
\email{trey@cscamm.umd.edu}
\author[G. Toscani]{Giuseppe Toscani}
\address{
    Giuseppe Toscani \\
    Dipartimento di Matematica \\
    Universit\`a di Pavia \\
    via Ferrata 1 \\
    27100 Pavia, Italy.
}
\email{giuseppe.toscani@unipv.it}
\title[Rosenau-Type Approximations to the Heat Equation]
      {Large-time Behavior of the Solutions to Rosenau \\
       Type Approximations to the Heat Equation
       }
\keywords{Large time behavior, Heat equation, Fourier metrics, Central differences scheme, Rosenau approximation, non-local model}
\subjclass[2010]{Primary: 35K05, 35Q20; Secondary: 35B40, 65M06}
\date{}
\begin{document}

    \begin{abstract}
      In this article we study the large-time behavior of the solution to a general Rosenau type approximation to the heat equation \cite{Rosenau:1992}, by showing that the solution to this approximation approaches the fundamental solution of the heat equation at a sub-optimal rate.
      The result is valid in particular for the central differences scheme approximation of the heat equation, a property which to our knowledge has never been observed before.
    \end{abstract}

  \maketitle

  \tableofcontents

  \section{Introduction}

    \subsection{{Rosenau Regularization of the Chapman-Enskog Expansion}}

            {In his seminal work} \cite{Rosenau:1992}, Rosenau has proposed a regularized version of the Chapman-Enskog expansion of hydrodynamics. This regularized expansion resembles the usual Navier-Stokes viscosity terms at low wave-numbers, but unlike the latter, it has the advantage of being a bounded macroscopic approximation to the linearized collision operator.
            The model is given by the scalar equation
            \be\label{Rose}
              \frac{\partial f}{\partial t} +  \frac{\partial \Psi(f)}{\partial v} = \left[ \frac {- \ve\xi^2}{1 + \ve^2 m^2 \xi^2}\ff(\xi) \right]^{\vee},
            \ee
            where $\ff(\xi)$ denotes the Fourier transform of $f(v)$, while $ f(\xi)^\vee $ denotes the inverse Fourier transform.
			
            The operator on the right hand side looks like the usual viscosity term $\ve f_{vv}$ at low wave-numbers $\xi$, while for higher wave numbers it is intended to model a bounded approximation of a linearized collision operator, thereby avoiding the artificial instabilities that occur when the Chapman-Enskog expansion for such an operator is truncated after a finite number of terms.
			
            One of the advantages of this regularization is that it is equivalent in small frequencies to the heat equation (and then regularizes initial data), whereas high frequencies behave as an absorption term. In particular, information will travel at finite speed, and tools from hyperbolic equations can be used.
            Note that the right side of \fer{Rose} can be written as
            \be\label{Ros2}
              \left[ \frac {- \ve \xi^2}{1 + \ve^2m^2 \xi^2}\ff(\xi) \right]^{\vee} = \frac \ve{(\ve m)^2 } \left[ \frac 1 {1 +\ve^2 m^2 \xi^2}\ff(\xi) -\ff(\xi) \right]^{\vee} = \frac 1{m \bar\ve} \left[ M_{\bar\ve}*f -f\right],
            \ee
            where $\bar\ve = m\ve$,  $*$ denotes convolution and
            \be
              \label{Max}
              M_\gamma(v) = \frac 1{2\gamma}\,e^{-|v|/\gamma}
            \ee
            is a non-negative function satisfying $\| M_\gamma\|_{L^1} = 1$.
            In other words, the Rosenau approximation consists in substituting  the linear diffusion equation
            \be
              \label{heat}
              \frac{\partial g}{\partial t} (v,t) = \sigma^2 \frac{\partial^2 g}{\partial v^2}(v,t)
            \ee
            with the linear kinetic equation
            \be
              \label{kin}
              \frac{\partial g}{\partial t} (v,t) = \frac {\sigma^2}{\ve^2} \left[ M_{\ve}*g(v,t) -g(v,t)\right]
            \ee
            in which the ``Maxwellian'' $M_\ve$  \cite{Cercignani:1988} is given by \fer{Max}.
			
            Equation \fer{Rose} has then been studied by S. Schochet and E. Tadmor in \cite{Schochet:1992} in the context of vanishing viscosity solutions of scalar conservation laws.
            {H. Liu and E. Tadmor then studied in \cite{Liu:2001} the Burgers equation with a Rosenau-like nonlocal viscosity term and proved the existence of a so-called \emph{critical threshold}, namely the existence of a critical value of the total variation of the initial condition triggering or not a finite-time blow-up of the solutions.
            This  nonlocal term} was also used by C. Rohde to model capillarity effects close to fluid-vapour phase transitions in Navier-Stokes equations \cite{Rohde:2005}. Interestingly, this non-local approach seems more physically relevant than the original Korteweg's  idea to use the heat operator for describing such phenomenons.
			
            More recently, it has been replaced in a general context of system of hyperbolic balance laws with non-local source term by R. Colombo and G. Guerra in the two companion papers \cite{colombo:2007,colombo:2008}.
            They showed in particular the well posedness in $L^1$ (globally in time) of the Cauchy problem for~\eqref{Rose} together with uniform stability estimates, in the framework of viscosity solutions.
            {Finally, C. Besse and T. Goudon obtained recently in \cite{Besse:2010} a system of macroscopic equations with a nonlocal Rosenau term by computing the diffusion limit of a linear space inhomogeneous kinetic equation.
            A similar approach was also used by T. Goudon and M. Parisot in \cite{Goudon:2011} on a kinetic equation describing the behavior of a multi-species gas of charged particles to obtain a macroscopic model with a Rosenau-like nonlocal term.
            }
			
            Despite the previous studies, it is not completely clear and well established from a mathematical point of view if the correction proposed by Rosenau is a \emph{good} approximation to the linear heat equation. In particular, it is not clear whether or not the large-time behavior of the solution to the Rosenau approximation agrees with the large-time behavior of the linear diffusion equation.
			
            The aim of this article is to give an answer to the previous question, and to underline that the Rosenau approximation can be viewed as a particular case of a general approximation to the heat equation by means of a linear kinetic equation of type \fer{kin}, provided the \emph{background density} $M_\ve$ is a probability density function of zero mean and variance proportional to $\ve^2$.
            In particular, it will be shown that, in a certain metric equivalent to the weak*-convergence of measures, the distance between the solution to the heat equation and the solution to the kinetic equation can be bounded uniformly in terms of $\ve$ and $t$, provided the background density has a sufficiently high number of moments (typically more than two).
			
            The plan on the article is as follows. In Section \ref{secKinetic}, using tools of the kinetic theory of rarefied gases, we will introduce a possible derivation and the main features of the Rosenau approximation with a general kernel. This allows to describe well known models such as the central differences schemes for the heat equation. In Section \ref{secRepr},  this kinetic formulation will be used to obtain explicit solutions to the Rosenau equation \eqref{kin}, using both Fourier transform and Wild sums. Last, in Section \ref{secAsympt}, we will investigate the large time behavior of the solutions to \eqref{kin}, showing that the convergence towards the fundamental solution occurs, in a suitable Fourier-based metric ({see next subsection}), at a suboptimal rate (compared to the heat equation). Finally, we will combine these results to show that strong convergence in $L^1$ towards the fundamental solution to the heat equation is obtained after a suitable regularization of the Rosenau equation, obtained by discarding its singular part.

    \subsection{{Functional Framework}}
      \label{subFuncFram}

      Before entering into the main topic of this paper, we list below the various functional spaces used in the following.
      For $p \in [1, +\infty)$ and $q \in [1, +\infty)$, we denote by $L_q^p$  the weighted Lebesgue spaces
            \begin{equation*}
                L^p_q := \left\{ f : \mathbb{R} \rightarrow \mathbb{R} \text{ measurable; }\|f\|_{L^p_q}^p := \int_{\mathbb{R}} |f(v)|^p \, (1+ v^2)^{q/2} \, dv < \infty \right\}.
            \end{equation*}
             In particular, the usual Lebesgue spaces are given by
            \[ L^p := L^p_0.\]
            Moreover, for $f \in L^1_q$, we can define for any $k \leq q$  the $k^{th}$ order \emph{moment} of $f$ as the quantity
            \begin{equation*}
              m_k(f):= \int_\RR f(v) \, |v|^{k} dv \, < \, \infty.
            \end{equation*}
            For $s \in \NN$, we denote by $W^{s,p}$ the Sobolev spaces
            \begin{equation*}
                W^{s,p} := \left\{ f \in L^s; \|f\|_{W{s,p}}^p := \sum_{|k| \leq s} \int_{\RR} \left |f^{(k)}(v)\right |^p \, dv < \infty \right \}.
            \end{equation*}
          If $p=2$ we set $H^s := W^{s,2}$.

          Given a probability density $f$, we define its \emph{Fourier transform} $\mathcal F_v(f)$ by
        \begin{equation*}
          \mathcal F_v(f)(\xi) = \widehat{f}(\xi) := \int_\RR e^{- i \, \xi \, v} f(v)\, dv, \qquad \forall  \xi \in \RR.
        \end{equation*}
        The Sobolev space $H^s$ can equivalently be defined for any $s \geq 0$ by the norm
            \begin{equation*}
                \|f\|_{H^{s}} := \left \| \mathcal F_v \left (f \, \right )\right \|_{L^2_{2s}}.
            \end{equation*}
            The homogeneous Sobolev space $\dot H^s$ is then defined by the homogeneous norm
            \begin{equation*}
              \|f\|_{\dot H^s}^2 := \int_\RR |\xi|^{2 s} \left | \widehat{f}(\xi) \right |^2 \, d \xi.
            \end{equation*}

                  Finally, we introduce a family of Fourier based metrics in the following way: given $s >0$ and two probability distributions $f_1$ and $f_2$, their Fourier based distance $d_s(f_1,f_2)$ is the quantity
            \begin{equation*}
            d_s(f_1,f_2) := \sup_{\xi \in \RR \setminus 0} \frac{\left |\widehat{f_1}(\xi)-\widehat{f_2}(\xi)\right |}{|\xi|^s}.
            \end{equation*}
            This distance is finite, provided that $f_1$ and $f_2$ have the same moments up to order $[s]$, where, if $s \notin \NN$,  $[s]$ denotes the entire part of $s$, or up to order $s-1$ if $s \in \NN$.
            Moreover $d_s$ is an \emph{ideal} metric \cite{Carrillo:2007}.
            Its main properties are the following
            \smallskip
            \begin{enumerate}[leftmargin=*]
                \item For all probability distributions $f_1$, $f_2$, $f_3$,
                \[ d_s(f_1* f_3, f_2*f_3) \leq d_s (f_1, f_2); \]
                \item Define for a given nonnegative constant $a$ the dilatation \[f_a(v) = \frac 1a f\left ( \, \frac va \, \right ).\] Then for all probability distributions $f_1$, $f_2$, and any nonnegative constant $a$
                \[ d_s( f_{1,a}, f_{2, a}) = a^s \, d_s(f_1, f_2). \]
            The $d_s$-metric is related to other more known metrics of large use in probability theory \cite{Gabetta:1995}.
            \end{enumerate}

\section{A Kinetic Description of the Rosenau Approximation}
  \label{secKinetic}

  Let $v \in \RR$ denote velocity, and let us assume to have at time $\tau>0$  a system of particles immersed in a background. Let us suppose that the number of particles in the system is sufficiently large to be studied by means of statistical mechanics, namely by giving the velocity distribution $f(v,\tau)$ at time $\tau >0$.
  Moreover, let us assume that the main phenomenon which can modify particle's velocity is the interaction of particles with the background. Let $M(w)$ denote  the fixed in time (probability) distribution of the particles of the background, which we will assume of finite variance $\gamma^2$.
Assume that the collision process of a particle with velocity $v$ with a background particle with a velocity $w$  generates a post-collision velocity $v^*$  given by
\begin{equation}
  \label{collProc}
  v^* = v + w .
\end{equation}
Then, in a suitable scaling \cite{Cercignani:1988}, the effect of interactions \fer{collProc} on the time-variation of the density $f(v,\tau)$  can be quantitatively described by a linear Boltzmann-type equation, in which the variation of the density is due to  a balance between a gain and loss terms.

More precisely, for a given number $v$, we take into account all the interactions of type \fer{collProc} which end up with the number $v^*$ (gain term) as well as all the interactions
which, starting from the number $v$, lose this value after interaction (loss term).
The balance equation for the density of particles can be fruitfully written in weak form. It corresponds to say that the aforementioned interaction process on particles modifies the solution $f(v,\tau)$ according to
\be
  \label{kine-w}
  \frac{d }{d\tau}\int_{\RR} \varphi(v) f(v,\tau)\,dv =\lambda\, \int_{\RR^2}
  \bigl( \varphi(v^*)-\varphi(v)\bigr) f(v,\tau) M(w) \,dv \, dw ,
\ee
where the constant $\lambda> 0$ denotes the intensity of the variation process, and $\varphi(v)$ is a smooth function.
Note that choosing $\varphi(v) = 1$ shows that, independently of the background distribution, $f(v,\tau)$ remains a probability
density if it so initially
\begin{equation*}
  \int_{\RR} f(v, \tau)\,dv = \int_{\RR} f_0(v)\,dv = 1 .
\end{equation*}
This is in general the unique conservation law associated to equation \fer{kine-w}.

From now on, let us assume in addition that the probability distribution of the background is centered, and its variance depends on a small parameter $\ve >0$. To emphasize this dependence, we will denote this distribution by  $M_\ve(w)= \ve^{-1} M(\ve^{-1} w)$.  Then $M_\ve(w)$ satisfies
 \be\label{norm}
  \int_\RR  M_\ve(w) \, dw = 1, \quad\int_\RR w \, M_\ve(w) \, dw = 0, \quad  \int_\RR w^2 M_\ve(w) \, dw  = \ve^2 \,\gamma^2.
 \ee
 The weak formulation \fer{kine-w} yields immediately the time evolution  of the moments of $f$. Taking $\varphi(v) = v$ one obtains
\begin{equation*}
\frac{d}{d\tau} \int_{\RR} v f(v,\tau) \, dv  = \lambda  \int_\RR w \, M_\ve(w) \, dw \int_\RR f(v,\tau) \, dv =  0.
\end{equation*}
 Moreover, if $\varphi(v) = v^2$
\be
\frac{d}{d\tau} \int_{\RR} v^2 f(v,\tau) \, dv = \lambda \, \int_{\RR^2} \left[ (v^*)^2 - v^2 \right] f(v,\tau) \, M_\ve(w) \, dv \, dw  = \lambda \, \ve^2  \,\gamma^2. \label{eqKinEvol}
\ee
Thus, the second moment of $f$ grows linearly with respect to time and depends on $\ve$. One way to avoid this dependency is to scale the time properly. Setting $t = \ve^2 \tau$ and introducing a new particles distribution function $g_\ve$ such that $g_\ve(v, t) = f(v, \tau)$ gives according to \eqref{eqKinEvol},
\begin{equation*}
\frac{d}{dt} \int_{\RR} {v^2} g_\ve(v, t) \, dv = \frac{1}{\ve^2}\frac{d}{d\tau} \int_{\RR}{v^2} f(v, \tau) \, dv = \lambda \,\gamma^2,
\end{equation*}
and the second moment of $g_\ve$ does not depend on $\ve$.

The distribution $g_\ve$ is a weak solution to
\be
\frac{d}{dt} \int_{\RR} g_\ve(v, t) \varphi(v) \, dv = \frac{\lambda}{\ve^2} \int_{\RR^2} \left [ \varphi(v+w) - \varphi(v) \right ] g_\ve(v,t \,)M_\ve(w) \, dv \, dw, \label{eqBoltzResc}
 \ee
with $g_\ve(v,t=0) = g_0(v) =f_0(v)$.
Note that, since
\begin{align*}
\int_{\RR^2} \varphi(v +w ) g_\ve(v,t)\, M_\ve(w) \, dv \, dw & \, = \, \int_{\RR^2} \varphi(z) \, g_\ve(v,t)\, M_\ve(z-v) \, dv \, dz \\
 & \, = \, \int_{\RR} \varphi(z) M_\ve*g_\ve(z)\, dz,
\end{align*}
equation \fer{eqBoltzResc} can be rewritten as
 \be\label{Ros}
\frac{d}{dt} \int_{\RR} g_\ve(v, t) \, \varphi(v) \, dv = \frac{\lambda}{\ve^2} \int_{\RR} \varphi(v)\left( M_\ve* g_\ve(v)- g_\ve(v)\right) \, dv,
 \ee
which is the weak form of \eqref{Ros2}.  Hence, if the background distribution is given by \fer{Max}, equation \fer{eqBoltzResc} coincides with Rosenau's approximation \fer{kin},  where $\sigma^2 = \lambda$. Of course, other choices of the background are possible, and, provided conditions \fer{norm} are satisfied, the evolution of the moments of the solution to \fer{eqBoltzResc} up to the second order do not depend on the background distribution.

It is then easy to see that, in the case in which $M_\ve$ coincides with the ``Maxwellian'' \fer{Max},  $g_\ve$ is an approximate (at the leading order in $\ve$) weak solution of the heat equation \eqref{heat}. Indeed, given that $\ve$ is small enough, one can Taylor expand $\varphi(v^*)$, where $v^* = v + w$, to obtain
\begin{equation*}
\varphi(v^*) = \varphi(v) + w  \,\varphi'(v) + \frac{w^2}{2} \, \varphi''(v) + \frac 1{3!}\varphi^{(3)}(\tilde v) w^3
\end{equation*}
for $\tilde{v} \in (v, v+w)$.
Using this relation in \eqref{eqBoltzResc}, one obtains
\begin{equation} \label{eqHeatWeak}
\frac{d}{dt} \int_{\RR} g_\ve(v, t) \varphi(v) \, dv = \frac{\lambda\gamma^2}2 \int_\RR g_\ve(v, t) \varphi''(v) \, dv + R(\ve)
\end{equation}
where the remainder $R(\ve)$ satisfies
 \be\label{rest}
|R(\ve)| \le  \frac \lambda{3!}\|\varphi^{(3)}\|_{L^\infty}\frac 1{\ve^2}  \int_{\RR} w^3 M_\ve(w) \, dw = \ve \, \frac \lambda{3!}\|\varphi^{(3)}\|_{L^\infty} \int_{\RR} w^3 M(w) \, dw.
 \ee
\begin{remark}\label{approx}
One can easily notice thanks to \eqref{eqHeatWeak} that equation \fer{eqBoltzResc}  is an approximation to the heat equation \fer{heat} (with diffusion coefficient $\sigma^2 = \lambda\, \gamma^2/2$), provided that the remainder converges to zero as $\ve \to 0$. In order to have this convergence, it is enough that the distribution of the background is such that some moment of order greater than two remains bounded. We will use this in the following to obtain various approximations of the heat equation, just  changing the distribution of the background.
\end{remark}

\begin{remark}\label{entropy}
The Rosenau type kinetic equation \fer{eqBoltzResc} is such that mass, momentum and energy  of its solution have the same evolution of the corresponding moments of the solution to the heat equation \fer{heat}. A further interesting analogy with the heat equation is given by studying the evolution of convex functionals along the solution.
Let $\Phi(r)$, $ r \ge 0$ be a (regular) convex function of $r$. Then, using equation \fer{Ros} we obtain
 \begin{align*}
  \frac{d}{dt} \int_{\RR} \Phi(g_\ve(v, t)) \, dv & =  \int_{\RR} \Phi' (g_\ve(v, t)) \frac{\partial g_\ve(v, t)}{\partial t}  \, dv \\
   & = \frac{\lambda}{\ve^2} \int_{\RR} \Phi' (g_\ve(v, t)) \left( M_\ve*g_\ve(v)- g_\ve(v)\right) \, dv .
 \end{align*}
Thanks to the convexity of $\Phi(\cdot)$,  for $r,s \ge 0$
 \[
\Phi'(s) (r-s) \le \Phi(r) - \Phi(s),
 \]
and one obtains
\[
\frac{d}{dt} \int_{\RR} \Phi(g_\ve(v, t)) \, dv \le \frac{\lambda}{\ve^2} \int_{\RR} \left( \Phi( M_\ve*g_\ve(v)) - \Phi( g_\ve(v)) \right) \, dv.
 \]
Now, use the fact that $M_\ve$ is a probability distribution, so that  by Jensen's inequality
\begin{align*}
  \int_{\RR}\Phi( M_\ve*g_\ve(v)) \, dv & =  \int_{\RR} \Phi\left(\int_{\RR}g_\ve(v-w))M_\ve(w)\, dw \right)\,dv \\
  & \le \int_{\RR^2} \Phi(g_\ve(v-w))M_\ve(w)\, dv \,dw = \int_{\RR} \Phi(g_\ve(v))\, dv,
\end{align*}
 and one concludes with
\[
\frac{d}{dt} \int_{\RR} \Phi(g_\ve(v, t)) \, dv \le 0.
 \]
Thus any convex functional is non-increasing along the solution to the Rosenau type kinetic equation \fer{kin}, in agreement with the analogous property of the heat equation.
\end{remark}

\begin{remark}\label{cds}
A leading example of Rosenau type approximation is obtained by assuming that the background distribution $M_\ve$ is a balanced Bernoulli distribution
\begin{equation} \label{CDKernel}
M_\ve(v) = \frac{1}{2}\left  [ \, \delta_0(v+ \ve \gamma) + \delta_0(v-\ve \gamma) \right ],
\end{equation}
where as usual $\delta_0$ denotes a Dirac mass concentrated at $v = 0$. Note that the ``Maxwellian'' \fer{CDKernel} satisfies \fer{norm}. Then, according to \eqref{Ros}, the distribution $g_\ve$ solves
\begin{equation} \label{centralDiff}
\frac{\partial g_\ve}{\partial t}(v, t) = \frac{\lambda}{2 \ve^2} \left [ \, g_\ve(v - \ve \gamma, t) - 2 g_\ve(v, t) + g_\ve(v + \ve \gamma, t) \right ].
\end{equation}

Now, let us fix $\lambda = 2$ and $\gamma = \sigma$. Then, given a parameter $\Delta v > 0$ and a uniform grid $v_i = i \Delta v, \,\,i \in \ZZ $ on $\RR$, by setting $g_i(t) := g_\ve(v_i, t)$ and $\ve = \Delta v / \sigma$,  equation \eqref{centralDiff}  reads
\begin{equation*}
g_i'(t) = \sigma^2 \frac{g_{i+1}(t) - 2 g_i(t) + g_{i-1}(t)}{\Delta v^2}.
\end{equation*}
This is exactly the classical semi-explicit second order central differences scheme for the heat equation \fer{heat}.
It follows that this scheme furnishes an approximation to the heat equation which satisfies all the properties outlined in Remarks \ref{approx} and \ref{entropy}, as well as the {weak convergence} towards the exact solution, which follows easily by applying the estimate \eqref{rest}.
\end{remark}

\begin{remark}
The previous example introduces into the matter a background with a non regular distribution (presence of point masses). This situation clearly differs from the standard Rosenau approximation, characterized by the regular density \fer{Max}.
Due to its wide applications in numerical simulations, the study of the properties of the Rosenau approximation given by this type of background turns out to be important. For this reason, in addition to the study of the classical Rosenau model, in what follows we will deal also with the Bernoulli type background, outlining when possible the principal differences.
\end{remark}

\section{Representations of the Solutions to the Rosenau Equation}
\label{secRepr}

The Rosenau approximation~\eqref{kin} of the heat equation is a linear kinetic equation of Boltzmann type. Therefore, we can resort both to linear kinetic theory and to the theory of linear diffusion equations to obtain explicit representations of the solution.
We shall present here two equivalent ways to construct these solutions, each one giving a different insight on the behavior of $g_\ve$.
In the rest of this Section, to avoid inessential difficulties, we will assume that the initial value $g_0(v)$ is a probability density with finite moments up to a given order (in general more than two).

 \subsection{Wild Sums}

  The first method we introduce to obtain an explicit representation of the solution  $g_\ve$ is closely related to  the so-called theory of \emph{Wild sum expansions} of the Boltzmann equation.
  Wild indeed proved in~\cite{Wild:1951} that one may represent solutions to the nonlinear Boltzmann equation for Maxwell molecules by using  convergent power series.
  This method is particularly simple when one deals with linear convolution equations such as~\eqref{kin}.
  To start with, let us set
  \[
  h_\ve(v,t) := \exp\left ( \frac{\lambda \, t}{ \ve^2 }\right ) g_\ve(v,t).
  \]
  Then the Cauchy problem for~\eqref{kin} can be rewritten as a fixed point problem (which is nothing but a Duhamel formula). Let $h \to \Phi_\ve(h)$ define the map
  \be \label{defFixPnt}
    \Phi_\ve(h) =  g_0 + \frac{\lambda }{ \ve^2 }\int_0^t  M_\ve * h(s) \, ds.
  \ee
 Then $g_\ve(v,t)$ solves~\eqref{kin} exactly when $\Phi_\ve(h_\ve) = h_\ve$.

  To find the fixed point, it is then sufficient to make a Picard iteration. Starting from $h^{(0)}_\ve :=g_0$ one defines, for $n \in \NN$
  \begin{equation} \label{defIteration}
    h^{(n+1)}_\ve = \Phi_\ve \left (h^{(n)}_\ve\right ).
  \end{equation}
 By recurrence, for $n \ge 1$ it yields
  \begin{equation}
        h^{(n)}_\ve(v,t) \, =  \ h^{(n-1)}_\ve(v,t) + \left ( \frac{\lambda \, t}{ \ve^2 }\right )^n \frac{1}{n!} \, M_\ve^{*_n} * g_0(v), \label{eqHN}
  \end{equation}
  where we use the shorthand $M^{*_n} := M * \cdots * M$ ($n$ times). Clearly, for $n \ge 0$
  \[ h^{(n+1)}_\ve(v,t) - h^{(n)}_\ve(v,t) \geq 0. \]
  Hence the sequence $\left (h^{(n)}_\ve(v,t)\right )_{n \geq 0}$, {being bounded in $L^1$}, converges towards $h_\ve(v,t) \geq 0$ when $n \to \infty$, where according to \eqref{eqHN}
  \begin{equation*}
    h_\ve(v,t) \, = \, g_0(v) + \sum_{n \geq 1} \left ( \frac{\lambda \, t}{ \ve^2 }\right )^n \frac{1}{n!} \, M_\ve^{*_n} * g_0(v).
  \end{equation*}
  By passing to the limit $n \to \infty$ in \eqref{defIteration}, one obtains that $h_\ve$ is a (nonnegative) fixed point for \eqref{defFixPnt}. This procedure allows to write the (nonnegative) solution $g_\ve$ to the Cauchy problem for \eqref{kin} as
  \begin{equation} \label{eqGpowSerie}
     g_\ve(v,t) \, = \, e^{-\lambda \, t/\ve^2} g_0(v) + e^{-\lambda \, t/\ve^2} \sum_{n \geq 1} \left ( \frac{\lambda \, t}{ \ve^2 }\right )^n \frac{1}{n!} \, M_\ve^{*_n} * g_0(v).
  \end{equation}

  \subsection{Fourier Transform}

    The standard approach to solve the heat equation on the whole space is to use Fourier transform.
        This is even more obvious for equation \eqref{kin}. Indeed, passing to Fourier variables in this equation shows that  the Fourier transform of $g_\ve$ is given by
    \begin{equation}
      \label{eqGFourier}
      \widehat{g_\ve}(\xi, t) = \widehat{g_0}(\xi) \, \exp\left(- A_\ve(\xi) \, t\right), \qquad \forall (\xi, t) \in \RR \times \RR_+,
    \end{equation}
    where $\widehat g_0$ is the Fourier transform of the initial density and
    \begin{equation}
      \label{defAeps}
      A_\ve(\xi) := \lambda \, \frac{1 -\widehat{M_\ve}(\xi) }{\ve^2}.
    \end{equation}
    Since $M_\ve$ is a probability density, $|\widehat{M_\ve}(\xi)| \le 1$ for all $\xi \in \RR$, which implies that the real part of $A_\ve$ is nonnegative.
    Moreover, thanks to condition \fer{norm}, if $M_\ve$ possesses more than two moments bounded, one can write
    \[
      A_\ve(\xi) := \frac{\lambda\gamma^2}2 \xi^2 + o( \xi^2),
    \]
    {where the $o( \xi^2)$ term is considered for $\xi \to 0$ (which will be the case until the end of the article).}
    Finally, it is possible to invert the Fourier transform \eqref{eqGFourier} and obtain {in a distributional sense}
    \be \label{eqGeps}
      g_\ve(v, t) = g_0 * P_\ve(\cdot, t) (v),
    \ee
    where we set
    \begin{equation*}
      P_\ve(v,t) = \exp\left\{- A_\ve(\xi) t\right\}^\vee(v).
    \end{equation*}

  Clearly,  representations \eqref{eqGpowSerie} and \eqref{eqGeps} have to coincide. Indeed, applying a Fourier transform to \eqref{eqGpowSerie} yields
  \begin{align}
    \widehat{g_\ve}(\xi,t) & \, = \,  e^{-\lambda \, t/\ve^2} \widehat{g_0}(\xi) + e^{-\lambda \, t/\ve^2} \sum_{n \geq 1} \left ( \frac{\lambda \, t}{ \ve^2 }\right )^n \frac{1}{n!} \, \widehat{M_\ve}^{n}(\xi) \, \widehat{g_0}(\xi) \notag \\
     & \, = \, e^{-\lambda \, t/\ve^2} \left [ \left ( e^{\widehat{M_\ve}(\xi)\lambda \, t / \ve^2} - 1\right ) + 1 \right ]\, \widehat{g_0}(\xi) \label{eqG2pieces} \\
     & \, = \, \exp\left(- A_\ve(\xi) \, t\right)\, \widehat{g_0}(\xi), \notag
  \end{align}
   where $A_\ve$ is given by \eqref{defAeps}.

  \begin{remark}
    \label{remLossSmooth}
    According to relation \eqref{eqG2pieces},  any Rosenau-type approximation to the heat equation is such that the smoothness of the solution is lost, independently of the regularity of the kernel $M_\ve$. Indeed, the fundamental solution to equation \eqref{kin}  (namely the solution to the Cauchy problem obtained from the initial value $g_0 = \delta_0 $) can be represented  in Fourier variable by
    \begin{align}
       \widehat{G_\ve}(\xi,t) & \, = \, e^{-\lambda \, t/\ve^2} \left ( e^{\widehat{M_\ve}(\xi)\lambda \, t / \ve^2} - 1\right ) + e^{-\lambda \, t/\ve^2} \notag \\
         & \, =: \, G_1(\xi,t) + G_2(t). \label{eqG1G2}
    \end{align}
    If $M_\ve \in L^1$, then according to Riemann-Lebesgue Lemma, $\left |\widehat{M_\ve}(\xi)\right | \to 0$ when $|\xi| \to \infty$ and then $\left |G_1(\xi,t) \right | \to 0$. The loss of smoothness comes from  $G_2(t)$ which is the Fourier transform of a Dirac mass (of weight depending on time). According to \fer{eqG1G2}, the singularity disappears exponentially  when $\ve \to 0$ or time goes to infinity.
      \end{remark}

    If one considers the Rosenau kernel \eqref{Max} with $\lambda = \sigma^2$,
    \begin{equation*}
      \widehat{M_\ve}(\xi) = \frac{1}{1+(\ve \sigma)^2 \xi^2},
    \end{equation*}
    and for all $t, \ve > 0$,
    \[G_1(\xi,t) = e^{-\sigma^2 \, t/\ve^2} \left ( \exp\left (\frac{\sigma^2 \, t}{\ve^2} \frac{1}{1+(\ve \sigma)^2 \xi^2}\right ) - 1 \right ). \]
    Thus $G_1(, \xi, t)$ decreases exponentially fast towards zero when $|\xi| \to \infty$, and the loss of smoothness comes only from $G_2$.

  \begin{figure}
    \newrgbcolor{darkblue}{0 0 0.6}
      \psset{xunit=1.5cm,yunit=1.4cm,algebraic=true,dotstyle=o,dotsize=3pt 0,linewidth=0.8pt,arrowsize=3pt 2,arrowinset=0.25}
        \begin{pspicture*}(-1.,-1.95)(4.,2)
            \psaxes[xAxis=true,yAxis=true,labels=x,Dx=1,Dy=.5,ticksize=-2pt 0,subticks=2]{->}(0,0)(-0.75,-1.75)(3.65,1.95)
            \rput[br](-.15,0.45){$\ve \sigma$}
            \rput[br](-.15,-0.58){$- \ve \sigma$}
            \rput[br](-.15,0.95){$2 \, \ve \sigma$}
            \rput[br](-.15,-1.08){$-2 \, \ve \sigma$}
            \rput[br](-.15,1.45){$3 \, \ve \sigma$}
            \rput[br](-.15,-1.58){$-3\, \ve \sigma$}
            \psline(0,0)(1,0.5)
            \psline(0,0)(1,-0.5)
            \psline(1,0.5)(2,1)
            \psline(1,0.5)(2,0)
            \psline(1,-0.5)(2,0)
            \psline(1,-0.5)(2,-1)
            \psline(2,1)(3,1.5)
            \psline(2,1)(3,0.5)
            \psline(2,0)(3,0.5)
            \psline(2,0)(3,-0.5)
            \psline(2,-1)(3,-0.5)
            \psline(2,-1)(3,-1.5)
            \rput[tl](3.75,0.19){$n$}
            \psdots[dotsize=4pt 0,dotstyle=*,linecolor=darkblue](0,0)
            \psdots[dotsize=4pt 0,dotstyle=*,linecolor=darkblue](1,0.5)
            \psdots[dotsize=4pt 0,dotstyle=*,linecolor=darkblue](1,-0.5)
            \psdots[dotsize=4pt 0,dotstyle=*,linecolor=darkblue](2,1)
            \psdots[dotsize=4pt 0,dotstyle=*,linecolor=darkblue](2,0)
            \psdots[dotsize=4pt 0,dotstyle=*,linecolor=darkblue](2,-1)
            \psdots[dotsize=4pt 0,dotstyle=*,linecolor=darkblue](3,1.5)
            \psdots[dotsize=4pt 0,dotstyle=*,linecolor=darkblue](3,0.5)
            \psdots[dotsize=4pt 0,dotstyle=*,linecolor=darkblue](3,-0.5)
            \psdots[dotsize=4pt 0,dotstyle=*,linecolor=darkblue](3,-1.5)
        \end{pspicture*}
        \caption[Convolution step of the central difference kernel in a Rosenau approximation]{Schematic representation of the convolution steps of the central difference kernel. Each node is a Dirac mass, weighted by binomial coefficients.}
        \label{figGraph}
    \end{figure}
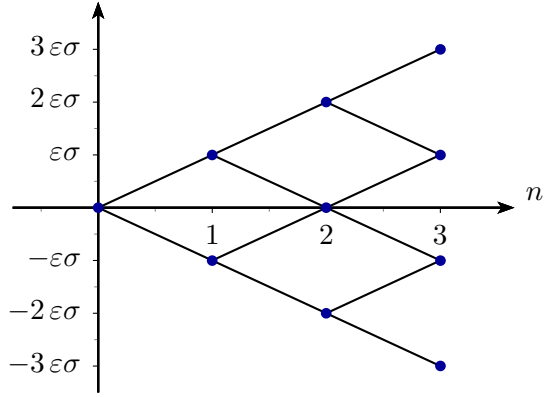

   On the contrary, if one considers the central differences kernel \eqref{CDKernel} with $\lambda = 2$ and $\sigma > 0$, then $ \widehat{M_\ve}(\xi) = \cos(\ve \sigma \xi)$ and for all $t, \ve > 0$,
    \[G_1(\xi,t) = e^{- 2 t/\ve^2} \left ( \exp\left (\frac{2 t}{\ve^2} \cos(\ve \sigma \xi)\right ) - 1 \right ), \]
    which does not converge at infinity. Then, if the kernel has some singularities, the solution to the Rosenau equation has at most the regularity of the initial condition. This enlightens a substantial difference of the Rosenau approximation with respect to the heat equation, in which the diffusion operator is such that the solution is instantaneously smoothed out.
    The representation \eqref{eqGpowSerie} can be used to understand the behavior in time of the solution obtained from the central differences kernel. Indeed, each convolution by $M_\ve$ consists in creating a weighted sum of each point (\ie Dirac masses) of the grid (see Figure \ref{figGraph}):
    \begin{align*}
      & M_\ve * \delta_0 (v) \, = \, \frac{1}{2} \left ( \delta_0 (v- \ve \sigma)  + \delta_0(v + \ve \sigma) \right ), \\
      & M_\ve^{*_2} * \delta_0(v ) \, = \, \frac{1}{4} \, \delta_0(v-2 \, \ve \sigma) + \frac{1}{2} \, \delta_0(v) + \frac{1}{4} \, \delta_0(v-2 \, \ve \sigma) , \\
      & M_\ve^{*_3} * \delta_0(v ) \, = \, \frac{1}{8} \, \delta_0(v-3 \, \ve \sigma) + \frac{3}{8} \, \delta_0(v - \ve \sigma)  + \frac{3}{8} \, \delta_0(v + \ve \sigma)+ \frac{1}{8} \, \delta_0(v + 3\, \ve \sigma)
    \end{align*}
    and so on. Thus, the fundamental solution of the central difference scheme is given by
    \begin{equation*}
      g_\ve(v,t) \, = \, e^{-2 t/\ve^2} \delta_0(v) + e^{-2 t/\ve^2} \sum_{n \geq 1} \left ( \frac{2 t}{ \ve^2 }\right )^n \frac{1}{n!} \, g^{(n)}_\ve(v),
    \end{equation*}
    where, for $n \in \NN$ we defined
    \begin{align*}
      g^{(2 n)}_\ve(v)  & \, = \, \left (\frac{1}{2}\right )^n \sum_{k=0}^n \begin{pmatrix} 2 n \\ 2 k \end{pmatrix} \left [ \delta_0(v+ 2 \, k \ve \sigma) + \delta_0(v - 2 \, k \ve \sigma) \right ], \\
      g^{(2 n + 1)}_\ve(v)  & \, = \, \left (\frac{1}{2}\right )^{n+1} \sum_{k=0}^n \begin{pmatrix} 2 n + 1 \\ 2 k +1 \end{pmatrix} \left [ \delta_0(v+ (2 k +1)  \ve \sigma) + \delta_0(v - (2 k + 1) \ve \sigma) \right ].
    \end{align*}

\section{Asymptotic Behavior of Solutions to the Rosenau Equation}
\label{secAsympt}

In Section \ref{secKinetic} the similarities between the Rosenau-type kinetic equation and the linear diffusion equation have been enlightened.
In particular,  as well known from the classical literature on numerical approximation of partial differential equations, equation \fer{kin} provides a consistent approximation of the heat equation in a fixed time interval.
It is not known, however, if equation \fer{kin} still realizes a good approximation for large times, and,  in case of a positive answer, in which way the difference between the solutions of the Rosenau and diffusion equations can be estimated with  respect both to time and the parameter $\ve$.

In this Section, we will furnish a partial answer to this question. To clarify our intent,  we will briefly resume various well-known facts about the large-time behavior of the solution to the heat equation posed in the whole space.
Let $g=g(v,t)$ be a solution to the heat equation \eqref{heat} and define the \emph{heat kernel} $\Omega_\sigma$ by
\begin{equation}
  \label{heatKernel}
  \Omega_\sigma(v,t) := \frac{1}{\sqrt{4 \pi \sigma^2 t}} \exp\left ( - \frac{v^2}{4 \sigma^2 t} \right ), \quad \forall (v,t) \in \RR \times \RR_+.
\end{equation}
This fundamental solution of \eqref{heat} represents an intermediate asymptotics of a large class of solutions to the heat equation.
The recent review article~\cite{Bartier:2011} gives a precise state of the art on this topic.
To make this concept more precise, we define the {Boltzmann \emph{entropy}} of $f$ as
\begin{equation*}
  \mathcal H(f) := \int_\RR f(v) \, \log f(v) \,  dv.
\end{equation*}
Then it can be shown  (see \eg \cite{Toscani:1996}) that $g(v,t)$ behaves as the heat kernel when $t \to \infty$, provided that the initial condition $g_0$ is of finite kinetic energy and entropy:
\begin{equation*}
  g_0 \in L^1_2 \quad \text{and} \quad  \mathcal{H}(g_0) < \infty.
\end{equation*}
Moreover, the rate of convergence towards the fundamental solution can be computed in $L^1$ norm
\begin{equation} \label{rateCvHeat}
\left \| g(t) - \Omega_\sigma(t) \right \|_{L^1} \leq \frac{C}{\sqrt{1+2t}},
\end{equation}
where $C$ is an explicit constant.
The bound \eqref{rateCvHeat} is sharp. A marked improvement of the constant in \fer{rateCvHeat} has been recently obtained in \cite{arnold:2008}, by selecting well parametrized Gaussian functions, characterized either by mass centering or by fixing the second moments or the covariance matrix of the solution.

Most of these results follow by considering the equivalence between  the heat equation and the linear Fokker-Planck equation
\begin{equation} \label{eqFP}
\frac{\partial u}{\partial t} = \frac{\partial}{\partial v} \left ( v u + \sigma^2\frac{\partial u}{\partial v} \right ).
\end{equation}
which has a stationary solution given by the Gaussian
\be \label{defGaussian}
  \omega_\sigma(v) := \Omega_\sigma (v, t = 1).
\ee
Setting $T(t) := e^{2 t - 1} / 2$, we can in fact consider the change of variables
\begin{equation} \label{selfsimScal}
u(v,t) := e^t g\left (e^t v, T(t)\right ).
\end{equation}
Then $u(v,t)$ is solution to the Fokker-Planck equation \eqref{eqFP} as soon as $g$ is a solution to the heat equation \eqref{heat}. The converse also holds true.
It has been shown  in \cite{Carrillo:1998} that the solution to the linear Fokker-Planck equation converges for large times to the stationary solution $\omega_\sigma$ and the  following asymptotic behavior holds
\begin{equation*}
\| u(t) - \omega_\sigma  \|_{L^1} \leq C e^{-t}, \, \forall t \geq 0.
\end{equation*}
Using the self-similar scaling \eqref{selfsimScal}, it is clear that this result is equivalent to the rate \eqref{rateCvHeat}, and once more, the bound is sharp.
We shall use a similar rescaling in our study of the intermediate asymptotics of equation \eqref{kin}.

Our goal in the following is to prove that the same kind of result holds for the Rosenau-type approximation to the heat equation.
Due to the generality of this approximation, which depends of the background distribution $M$, we will obtain a weaker convergence (with respect to the $L^1$ distance) to the fundamental solution, as discussed in Section \ref{secRepr}, Remark \ref{remLossSmooth}.
The $L^1$ distance will be here substituted by the suitable weaker Fourier based metrics $d_s$ described in Section \ref{subFuncFram}, which are particularly adapted to the convolution structure of the kinetic equation \eqref{eqBoltzResc}.
This family of metrics has been introduced in the article \cite {Gabetta:1995} to study the trend to equilibrium of solutions to the space homogeneous Boltzmann equation for Maxwell molecules, and subsequently applied to a variety of problems related to kinetic models of Maxwell type.
For a more detailed description, we address the interested reader to the recent lecture notes \cite{Carrillo:2007}.

These metrics can be easily and fruitfully applied to the study of the large-time behavior of the heat equation. Indeed, we have seen that in Fourier variables, the solution to the heat equation \eqref{heat} with initial condition $g_0$ is
\begin{equation*}
  \widehat g(\xi,t) = \widehat{g_0}(\xi) \, \exp \left (-\sigma^2 \xi^2 \,t \right ).
\end{equation*}
Thus, if $g_0$ is of finite mass, the solution will converge pointwise towards $0$ when $t \to \infty$.  A way to consider a nontrivial limit distribution is to make the change of variable $\xi \to V(t) \, \xi$ with $V(t) := (1+t)^{-1/2}$. Then, the scaled distribution will converge towards the Gaussian distribution
\[
  \widehat{g_0}(0) \, \exp \left (-\sigma^2 \xi^2 \right ),
\]
and one can improve the rate \eqref{rateCvHeat} in Fourier distance, as soon as the initial datum possesses finite moments of certain order equal to those of the Gaussian   \cite{Goudon:2002}. From now on, let us set
\be
  \label{eqGaussFourier}
  \widehat\omega_\sigma(\xi) \, = \, \exp \left (-\sigma^2 \xi^2 \right ),
\ee
the Fourier transform of the Gaussian distribution \eqref{defGaussian}. We have the following result.

\begin{proposition}
    \label{propAsymptEx}
    Let $V(t) := (1+t)^{-1/2}$. For a given $s > 1$, let $0 \leq g_0 \in L^1_s$ a distribution of unit mass.
    Then, if $g(v,t)$ is the unique solution to the Cauchy problem \eqref{heat} with initial condition $g_0$,  the scaled distribution $h(v, t) := V(t)^{-1}\, g \left( V(t)^{-1}\, v, t\right)$ is such that, for all $t \geq 0$
    \begin{equation} \label{ineqdsExact}
      d_s \left ( h(t), \omega_\sigma \right ) \leq \frac{1}{(1+t)^{s/2}}d_s(g_0,\omega_\sigma).
    \end{equation}
\end{proposition}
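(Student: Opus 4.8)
The plan is to reduce \eqref{ineqdsExact} to the dilation property (2) of the metric $d_s$ from Section \ref{subFuncFram}, using the explicit Fourier representation of the heat flow together with the fact that the Gaussian $\omega_\sigma$ is its own self-similar profile under that flow. The point is that the rescaled density $h(\cdot,t)$ is \emph{not} simply a dilate of $g_0$ (the heat flow also acts), but after pulling out the ``already achieved'' part of the Gaussian decay, what is left is exactly a dilated copy of the discrepancy $g_0-\omega_\sigma$.

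Concretely, I would first pass to Fourier variables. With $V(t)=(1+t)^{-1/2}$, the density $h(\cdot,t)$ is the dilation $g(\cdot,t)_{V(t)}$ in the notation of Section \ref{subFuncFram}, so the Fourier scaling rule gives $\widehat h(\xi,t)=\widehat g\bigl(V(t)\,\xi,t\bigr)$. Inserting the explicit solution $\widehat g(\xi,t)=\widehat{g_0}(\xi)\,e^{-\sigma^2\xi^2 t}$ and using $t\,V(t)^2=t/(1+t)$,
\begin{equation*}
  \widehat h(\xi,t)\,=\,\widehat{g_0}\bigl(V(t)\,\xi\bigr)\,\exp\!\left(-\sigma^2\xi^2\,\frac{t}{1+t}\right)\,=\,\exp\!\left(-\sigma^2\xi^2\,\frac{t}{1+t}\right)\widehat{g_{0,V(t)}}(\xi).
\end{equation*}
The key observation is that the Gaussian \eqref{eqGaussFourier} splits in exactly the same way: since $V(t)^2+t\,V(t)^2=1$,
\begin{equation*}
  \widehat\omega_\sigma(\xi)\,=\,\exp\!\left(-\sigma^2\xi^2\,\frac{t}{1+t}\right)\exp\!\left(-\sigma^2 V(t)^2\xi^2\right)\,=\,\exp\!\left(-\sigma^2\xi^2\,\frac{t}{1+t}\right)\widehat{\omega_{\sigma,V(t)}}(\xi),
\end{equation*}
which merely says that the heat semigroup maps $\omega_\sigma$ to a rescaled copy of itself. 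Subtracting, the common exponential factors out, and since that factor is $\leq 1$ for all $\xi$ and all $t\geq0$,
\begin{equation*}
  \frac{\bigl|\widehat h(\xi,t)-\widehat\omega_\sigma(\xi)\bigr|}{|\xi|^s}\,\leq\,\frac{\bigl|\widehat{g_{0,V(t)}}(\xi)-\widehat{\omega_{\sigma,V(t)}}(\xi)\bigr|}{|\xi|^s}.
\end{equation*}
Taking the supremum over $\xi\neq0$ and invoking the dilation property of $d_s$ with $a=V(t)$ yields
\begin{equation*}
  d_s\bigl(h(t),\omega_\sigma\bigr)\,\leq\,d_s\bigl(g_{0,V(t)},\omega_{\sigma,V(t)}\bigr)\,=\,V(t)^s\,d_s(g_0,\omega_\sigma)\,=\,\frac{1}{(1+t)^{s/2}}\,d_s(g_0,\omega_\sigma),
\end{equation*}
which is \eqref{ineqdsExact}. (If $g_0$ and $\omega_\sigma$ do not share their moments up to order $[s]$ the right-hand side is $+\infty$ and there is nothing to prove; when they do agree, $d_s(g_0,\omega_\sigma)<\infty$ by the discussion in Section \ref{subFuncFram} since $g_0\in L^1_s$.)

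There is no real obstacle here: the argument is a short Fourier computation, and the only non-bookkeeping ingredient is recognizing the self-similar factorization of $\widehat\omega_\sigma$, which is precisely why the Gaussian and no other profile is the natural limit. The one thing to keep straight is that two distinct scalings are in play, the change of variables $v\mapsto V(t)^{-1}v$ defining $h$ and the companion rescaling hidden in the semigroup action on $\omega_\sigma$; once both are accounted for, the decay exponent $s/2$ falls out of $V(t)^s=(1+t)^{-s/2}$. This same computation is the template for the Rosenau case: there one replaces $e^{-\sigma^2\xi^2 t}$ by $e^{-A_\ve(\xi)t}$, and controlling the gap between $A_\ve(\xi)$ and its quadratic part $\tfrac{\lambda\gamma^2}{2}\xi^2$ (and the presence of the singular part of the kernel) is where the loss of optimality will enter.
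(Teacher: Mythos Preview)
Your proof is correct and follows essentially the same approach as the paper: pass to Fourier variables, factor out the common exponential $\exp(-\sigma^2\xi^2\,t/(1+t))$ from both $\widehat h$ and $\widehat\omega_\sigma$, bound it by $1$, and then extract the factor $V(t)^s$. The only cosmetic difference is that you package the final step as an appeal to the abstract dilation property of $d_s$, whereas the paper carries out the change of variables $\xi\mapsto V(t)\xi$ explicitly.
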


\begin{proof}
    Thanks to the scaling properties of the Fourier transform one has
    \begin{align*}
        d_s \left ( h(t), \omega_\sigma \right ) & = \sup_{\xi \in \RR \setminus 0} \, \frac{ \left | \, \widehat g(V(t) \xi, t)-\widehat{\omega_\sigma}(\xi) \right | }{|\xi|^s}  \\
        & = \sup_{\xi \in \RR \setminus 0} \, \frac{1}{|\xi|^s}\left |\widehat{g_0}(V(t) \xi) - \exp \left (-\sigma^2 \xi^2 V^2(t) \right )\right |  \left|\exp \left (- \sigma^2 \xi^2 \frac{t }{1+t} \right )\right| \\
        & \leq  V(t)^s \sup_{\xi \in \RR \setminus 0} \, \frac{1}{|\xi|^s}\left | \widehat{g_0}(\xi) - \exp \left (-\sigma^2 \xi^2 \right )\right | .
    \end{align*}
\end{proof}

\begin{remark}
  Taking a Dirac mass as the initial condition in the Cauchy problem  for equation \ref{heat} shows that these rates are optimal.
\end{remark}

\subsection{Approximate Solutions with Finite Energy}
  \label{subFinKinEn}

To recover the asymptotic behavior of the solution to the Rosenau-type approximation \fer{Ros}, we are going to apply a technique similar to that used in Proposition \ref{propAsymptEx}.
Let us consider again the scaling $\xi \to V(t) \xi$ with $V(t) := (1+t)^{-1/2}$, and define
\begin{equation*}
h(v, t) := V(t)^{-1}\, g \left( V(t)^{-1}\, v, t\right)
 \text{ and } h_\ve(v, t) := V(t)^{-1}\, g_\ve \left( V(t)^{-1}\, v, t\right).
\end{equation*}
We choose in this section an initial datum $0 \leq g_0 \in L^1_2$ such that
\be\label{norm2}
  \int_\RR  g_0(v) \, dv = 1, \quad\int_\RR v \, g_0(v) \, dv = 0, \quad  \int_\RR v^2 g_0(v) \, dv  = E < \infty,
 \ee
and $\lambda \, \gamma^2/2 = \sigma^2$. Then the quantity  $d_2 \, (g_0, \omega_\sigma)$ is bounded, and, according to \eqref{ineqdsExact} one obtains
\begin{equation} \label{ineqd2hepsomega}
d_2 \, (h_\ve(t), \omega_\sigma) \leq \frac{1}{1 + t}d_2 \, (g_0, \omega_\sigma) + d_2 \, (h_\ve(t), h(t)), \, \forall t \geq 0.
\end{equation}
Hence, we have to estimate the Fourier distance between the exact scaled solution $h$ and its approximate counterpart $h_\ve$. We shall use for this the representation~\eqref{eqGFourier} of $g_\ve$ in Fourier variables. Let us introduce a positive parameter $R$. Since the mass of the initial datum $g_0$ is equal to $1$, one has
\begin{align*}
d_2 \, (h_\ve(t), h(t)) & \leq \sup_{\xi \in \RR \setminus 0} \,  \frac{1}{\xi^2} \left | \exp \left \{ - A_\ve \left ( \frac{\xi}{\sqrt{1+t}} \right ) t \right  \} - \exp\left \{ - \sigma^2 \xi^2 \frac{t}{1 + t} \right \} \right | \\
& \leq \frac{2}{R^2} + \sup_{|\xi| \leq R} \, \frac{1}{\xi^2} \left | \exp \left \{ - A_\ve \left ( \frac{\xi}{\sqrt{1+t}} \right ) t \right  \} - \exp\left \{ - \sigma^2  \xi^2 \frac{t}{1 + t} \right \} \right | \\
& \leq  \frac{2}{R^2} + \sup_{|\xi| \leq R} \, \frac{1}{\xi^2}  \left | \exp \left \{ - \sigma^2  \xi^2 \frac{t}{1 + t} \left [ \frac{1 + t}{ \sigma^2 \xi^2}A_\ve \left ( \frac{\xi}{\sqrt{1+t}} \right )-1 \right ] \right \} - 1 \right |.
\end{align*}
Using the elementary inequality $|1- e^{-x}| \leq |x|$, one finally has
\begin{equation}\label{ineqd2hepsh}
d_2(h_\ve(t), h(t)) \leq  \frac{2}{R^2} + \sup_{|\xi| \leq R} D_\ve(\xi, t),
\end{equation}
where
\begin{equation*}
D_\ve(\xi, t) := \frac{t}{(\ve \xi )^2} \left | \lambda\, \left( 1 - \widehat{M_\ve} \left ( \frac{\xi}{\sqrt{1+t}} \right )\right) - \frac{(\ve \sigma \xi )^2}{1 + t} \right |.
\end{equation*}

In order to compute this last quantity and to obtain the rate of convergence, let us specify both the background distribution and the constants $\gamma$ and $\lambda$.

\paragraph{\textbf{Central Differences.}}

    As in Remark \ref{cds}, let us set $\lambda = 2$ and
    \begin{equation*}
      M_\ve(v) = \frac{1}{2} [ \delta_0(v+ \ve \sigma) + \delta_0(v-\ve \sigma) ].
    \end{equation*}
    Then $ \widehat{M_\ve}(\xi) = \cos(\ve \sigma \xi)$, and thanks to Taylor's theorem, there exists $\theta \in (0,1)$ such that
    \begin{equation}
        D_\ve(\xi, t) = \frac{t}{3} \frac{\ve \sigma \xi}{(1+t )^{3/2}} \left | \sin \left (\theta\frac{\ve \sigma \xi}{\sqrt{1+t}} \right )\right |
        \leq \frac{t}{(1+t)^2} \frac{(\ve \sigma \xi)^2}{3}.
        \label{ineqDepsCD}
    \end{equation}
    Gathering inequalities \eqref{ineqd2hepsh} and \eqref{ineqDepsCD} yields for $R > 0$ the inequality
    \begin{equation*}
      d_2(h_\ve(t), h(t)) \leq \frac{2}{R^2} + \frac{t}{(1+t)^2} \frac{(\ve \sigma R)^2}{3}.
    \end{equation*}
    It just remains to optimize on $R$ to obtain
    \begin{equation}
      \label{ineqd2hepshCD}
      d_2(h_\ve(t), h(t)) \leq \sqrt{\frac{3 \sigma^2}{2}} \ve \frac{\sqrt{t}}{1+t}.
    \end{equation}
    Inequality \eqref{ineqd2hepshCD} shows that the second term in \eqref{ineqd2hepsomega} has a slower rate of decay with respect to the first one.
    In contrast to the exact solution $h$,  which is known to converge in $d_2$ towards the self-similar one at rate $(1+t)^{-1}$ \cite{Goudon:2002}, we found that the approximate solution converges towards the exact one at rate $(1+t)^{-1/2}$. We just stated the following
	
    \begin{proposition}
        {Let $0 \leq g_0 \in L_2^1$ satisfy \eqref{norm2}}.
        If $g_\ve$ is solution to the associated Cauchy problem for the Rosenau-type approximation \fer{Ros} with the central difference kernel \fer{CDKernel}, then the scaled distribution
         \[h_\ve(t, v) :=  V(t)^{-1} \, g_\ve \left ( V(t)^{-1} v, t\right )\]
        for  $V(t) := (1+t)^{-1/2}$ verifies for all $t \geq 0$
        \begin{equation*}
        d_2 \left ( h_\ve(t), \omega_\sigma \right ) \leq \frac{1}{1 + t}d_2(g_0, \omega_\sigma) +  \sqrt{\frac{3 \sigma^2}{2}} \ve \frac{\sqrt{t}}{1+t}.
        \end{equation*}
    \end{proposition}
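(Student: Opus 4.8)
The statement is essentially a repackaging of Proposition \ref{propAsymptEx} (used with $s=2$) together with the consistency estimate \eqref{ineqDepsCD}–\eqref{ineqd2hepshCD} derived just above, so the plan is to assemble the two ingredients and to check carefully the hypotheses each one requires. First I would apply the triangle inequality for the ideal metric $d_2$,
\[
  d_2\bigl(h_\ve(t),\omega_\sigma\bigr)\le d_2\bigl(h(t),\omega_\sigma\bigr)+d_2\bigl(h_\ve(t),h(t)\bigr),
\]
where $h$ is the scaled \emph{exact} heat solution with the same initial datum $g_0$. For the first term, note that the hypothesis $\lambda\gamma^2/2=\sigma^2$ together with \eqref{norm2} guarantees that $g_0$ and $\omega_\sigma$ share mass and vanishing first moment, so $d_2(g_0,\omega_\sigma)<\infty$; Proposition \ref{propAsymptEx} with $s=2$ then yields $d_2(h(t),\omega_\sigma)\le (1+t)^{-1}d_2(g_0,\omega_\sigma)$, which is precisely the first summand in the asserted bound (this is \eqref{ineqdsExact}–\eqref{ineqd2hepsomega}).

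The substance is the second term. Here I would use the Fourier representation \eqref{eqGFourier} of $g_\ve$ and the dilation rule $\widehat{h_\ve}(\xi,t)=\widehat{g_\ve}(V(t)\xi,t)$ (and likewise $\widehat{h}(\xi,t)=\widehat{g_0}(V(t)\xi)\,e^{-\sigma^2\xi^2 t/(1+t)}$). Since $g_0$ has unit mass, $|\widehat{g_0}|\le1$, so factoring $\widehat{g_0}(V(t)\xi)$ out of the difference leaves
\[
  d_2\bigl(h_\ve(t),h(t)\bigr)\le\sup_{\xi\neq0}\frac{1}{\xi^{2}}\Bigl|e^{-A_\ve(V(t)\xi)t}-e^{-\sigma^{2}\xi^{2}t/(1+t)}\Bigr|.
\]
I would then split this supremum at a free threshold $|\xi|=R$: on $|\xi|>R$ the crude bound $1/\xi^{2}\le1/R^{2}$ and the fact that both exponentials have modulus $\le1$ (the real part of $A_\ve$ is nonnegative) give the contribution $2/R^{2}$, while on $|\xi|\le R$ one factors the difference and invokes the elementary inequality $|1-e^{-x}|\le|x|$ for $x$ with nonnegative real part, reducing the estimate to controlling $\sup_{|\xi|\le R}D_\ve(\xi,t)$ as in \eqref{ineqd2hepsh}.

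Finally, for the central-difference kernel \eqref{CDKernel} one has $\widehat{M_\ve}(\xi)=\cos(\ve\sigma\xi)$ and $\lambda=2$, so $D_\ve(\xi,t)$ measures the defect between $2\bigl(1-\cos y\bigr)$ and $y^{2}$ with $y=\ve\sigma\xi/\sqrt{1+t}$. Since the function $y\mapsto2(1-\cos y)-y^{2}$ vanishes to third order at the origin, Taylor's theorem with Lagrange remainder produces a factor $|\sin(\theta y)|$ for some $\theta\in(0,1)$, and bounding $|\sin(\theta y)|\le|y|$ gives \eqref{ineqDepsCD}, i.e. $D_\ve(\xi,t)\le \tfrac{t}{3}(1+t)^{-2}(\ve\sigma\xi)^{2}$. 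Substituting this into the $|\xi|\le R$ piece leaves a term of the form $2R^{-2}+c(t)R^{2}$, and optimizing over $R$ yields \eqref{ineqd2hepshCD}; adding the two contributions finishes the proof. The only delicate point is the bookkeeping of the powers of $\xi$ and of $1+t$: one must check that after dividing the third-order remainder by $\xi^{2}$ there genuinely remains a factor $\xi^{2}$ (not merely $\xi$), so that the $R$-optimization is balanced and produces the $\sqrt{t}/(1+t)$ decay rather than something weaker. It is worth stressing that no stronger (for instance $L^{1}$) estimate is available for this kernel, since \eqref{CDKernel} is a combination of Dirac masses; this is exactly why the argument must be carried out in the weak Fourier metric $d_2$.
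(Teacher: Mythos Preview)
Your proposal is correct and follows essentially the same route as the paper: the triangle inequality \eqref{ineqd2hepsomega}, Proposition~\ref{propAsymptEx} with $s=2$ for the first term, the threshold splitting at $|\xi|=R$ combined with $|1-e^{-x}|\le|x|$ to reach \eqref{ineqd2hepsh}, the third-order Taylor remainder for $1-\cos y$ yielding \eqref{ineqDepsCD}, and finally the optimization in $R$ giving \eqref{ineqd2hepshCD}. Your closing remark about the bookkeeping is exactly the point that makes the argument work.
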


\paragraph{\textbf{Rosenau Regularization.}}

    In this case the Fourier transformed ``Maxwellian'' reads
    \begin{equation*}
      \widehat{M_\ve}(\xi) = \frac{1}{1+(\ve \sigma)^2 \xi^2},
    \end{equation*}
    and $\lambda = \sigma^2$. We obtain
    \begin{equation*}
      D_\ve(\xi, t) = \frac{(\ve \sigma)^2t}{1+t} \left | \frac{\xi^2}{1 + t + (\ve \sigma)^2 \xi^2} \right|.
    \end{equation*}
    Inserting this expression in inequality \eqref{ineqd2hepsh} gives for $R > 0$
    \begin{equation*}
      d_2(h_\ve(t), h(t)) \leq \frac{2}{R^2} + \frac{t}{(1+t)^2} \frac{(\ve \sigma R)^2}{3}.
    \end{equation*}
    It just remains to optimize on R to obtain the rate of convergence in $d_2$ norm of $h_\ve$ towards $h$:
    \begin{equation}
      \label{ineqd2hepshRos}
      d_2(h_\ve(t), h(t)) \leq \sqrt{\frac{\sigma^2}{2}} \ve \frac{\sqrt{t}}{1+t}.
    \end{equation}
    Hence, this approximation has exactly the same order of convergence than the one given by the central differences kernel. We  proved

    \begin{proposition}
        {Let $0 \leq g_0 \in L_2^1$ satisfy \eqref{norm2}}.
        If $g_\ve$ is solution to the associated Cauchy problem for the Rosenau-type approximation \fer{Ros} with the Rosenau kernel \fer{Max}, then the scaled distribution
        \[ h_\ve(t, v) :=  V(t)^{-1} \, g_\ve \left ( V(t)^{-1} v, t\right )\]
        for  $V(t) := (1+t)^{-1/2}$ verifies, for all $t \ge 0$
        \begin{equation*}
          d_2 \left ( h_\ve(t), \omega_\sigma \right ) \leq \frac{1}{1 + t}d_2(g_0, \omega_\sigma) +  \sqrt{\frac{ \sigma^2}{2}} \ve \frac{\sqrt{t}}{1+t}.
        \end{equation*}
    \end{proposition}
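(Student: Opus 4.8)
The plan is that, with the chain of estimates established just above the statement, the proof is essentially a synthesis of three ingredients: a triangle inequality that separates the ``self-similar'' error from the ``consistency'' error, the Fourier estimate \eqref{ineqd2hepsh} of the consistency error, and the explicit evaluation of $D_\ve$ for the kernel \eqref{Max}, followed by one optimization over a frequency cutoff.

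\emph{Reduction.} For every $t\ge 0$ I would write
\[
d_2\bigl(h_\ve(t),\omega_\sigma\bigr)\ \le\ d_2\bigl(h(t),\omega_\sigma\bigr)+d_2\bigl(h_\ve(t),h(t)\bigr),
\]
where $h$ is the rescaled exact solution of \eqref{heat}. Since $g_0$ satisfies \eqref{norm2}, $g_0$ and $\omega_\sigma$ are probability densities sharing their moments up to order one, so $d_2(g_0,\omega_\sigma)<\infty$, and Proposition~\ref{propAsymptEx} with $s=2$ bounds the first term by $(1+t)^{-1}d_2(g_0,\omega_\sigma)$. This is exactly \eqref{ineqd2hepsomega}, so only $d_2(h_\ve(t),h(t))$ remains.

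\emph{Consistency error.} Here I would invoke the Fourier representation \eqref{eqGFourier}: $\widehat{h_\ve}(\xi,t)=\widehat{g_0}(V(t)\xi)\,e^{-A_\ve(V(t)\xi)t}$ while $\widehat{h}(\xi,t)=\widehat{g_0}(V(t)\xi)\,e^{-\sigma^2V(t)^2\xi^2 t}$, with $|\widehat{g_0}|\le1$. Splitting the supremum defining $d_2$ at a level $R>0$, the tail $|\xi|>R$ contributes at most $2/R^2$ (both exponentials have modulus $\le 1$ because $\mathrm{Re}\,A_\ve\ge0$, by \eqref{defAeps}); on $|\xi|\le R$ one factors out $e^{-\sigma^2\xi^2 t/(1+t)}$ and uses $|1-e^{-x}|\le|x|$ for $\mathrm{Re}\,x\ge0$. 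This reproduces \eqref{ineqd2hepsh} with $D_\ve(\xi,t)$ as displayed there. For the kernel \eqref{Max} one has $\widehat{M_\ve}(\xi)=(1+(\ve\sigma)^2\xi^2)^{-1}$ and $\lambda=\sigma^2$, and a one-line simplification of
\[
1-\widehat{M_\ve}\!\Bigl(\tfrac{\xi}{\sqrt{1+t}}\Bigr)=\frac{(\ve\sigma)^2\xi^2}{1+t+(\ve\sigma)^2\xi^2}
\]
gives $D_\ve(\xi,t)=\dfrac{(\ve\sigma)^2 t}{1+t}\,\Bigl|\dfrac{\xi^2}{1+t+(\ve\sigma)^2\xi^2}\Bigr|$, exactly the expression recorded before the statement. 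Bounding this fraction on $|\xi|\le R$ and inserting into \eqref{ineqd2hepsh} yields, for every $R>0$, an estimate $d_2(h_\ve(t),h(t))\le 2R^{-2}+c\,R^2$ with $c$ of the size $(\ve\sigma)^2 t/(1+t)^2$; minimizing over $R$ (at $R^2=\sqrt{2/c}$) produces \eqref{ineqd2hepshRos}. Combining \eqref{ineqd2hepsomega} and \eqref{ineqd2hepshRos} gives the claimed inequality.

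The only genuinely delicate step is the evaluation of $D_\ve$: one needs the small-frequency behaviour of $1-\widehat{M_\ve}$ to be controlled to the right order in $\xi$, with the correct powers of $1+t$, so that after the cutoff optimization the surviving rate is the suboptimal $\sqrt{t}/(1+t)$ rather than something slower — everything else is the same template already run for the central differences kernel in \eqref{ineqDepsCD}--\eqref{ineqd2hepshCD}. For the Rosenau kernel this step is painless precisely because $\widehat{M_\ve}$ is an explicit rational function, so, unlike in the general case, no moment hypothesis on the background has to be verified and the estimate reduces to the one-line identity above.
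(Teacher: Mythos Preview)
Your proposal is correct and follows essentially the same route as the paper: the triangle inequality \eqref{ineqd2hepsomega}, the frequency-cutoff bound \eqref{ineqd2hepsh}, the explicit computation of $D_\ve$ for the Rosenau kernel, and the optimization over $R$ leading to \eqref{ineqd2hepshRos}. The only difference is cosmetic---you spell out the bound on $D_\ve$ over $|\xi|\le R$ as $c\,R^2$ with $c$ of order $(\ve\sigma)^2 t/(1+t)^2$, whereas the paper writes the resulting inequality directly---but the argument is identical.
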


\begin{remark}
  Note that as a by-product of our analysis of the large-time behavior of the solution to the Rosenau-type approximation of the diffusion equation we proved that the $d_2$-distance between the solution of the diffusion equation and its numerical approximation by the central difference scheme is uniformly bounded in time with respect to the parameter $\ve$, as given by \fer{ineqd2hepshCD}.
\end{remark}

\subsection{Approximate Solution with Finite Fourth Order Moment}
\label{subFourth}

In Subsection \ref{subFinKinEn}, the convergence results required the boundedness of the $d_2$-distance between the initial value of the Rosenau equation and the Gaussian density. This boundedness has been achieved by assuming that the initial datum is of finite energy.
Indeed, moments play an important role with respect to convergence. We will improve here the results of Subsection \ref{subFinKinEn},  provided that the initial condition has finite moments up to order four. We will see that this allows to compute the rate of convergence (in time) towards self-similarity with very few assumptions on the kernel $M_\ve$.

Without loss of generality (thanks to scaling and translational invariance), we can choose the initial distribution {$0 \leq g_0 \in L_4^1$} to satisfy
\begin{equation}
  \label{eqMomOrd4}
  \int_\RR g_0(v)\, \varphi(v) \, dv = (1, 0, 1, 0, \mu),
\end{equation}
where $\varphi(v) = (1, v, v^2/2, v^3, v^4)$, and $\mu$ is a positive constant. If $g$ is solution to the Cauchy problem for equation \eqref{heat} with initial condition $g_0$, then on the one hand
\begin{equation} \label{eqMomG}
\int_\RR g(v, t)\, \varphi(v) \, dv = (1, 0, 1 + t \sigma^2, 0, \mu + 24 t + 12 \sigma^2 t^2 ).
\end{equation}
On the other hand, a solution $g_\ve$ to equation \eqref{Ros} with same initial datum verifies
\begin{equation}\label{eqMomGeps}
\int_\RR g_\ve(v, t)\, \varphi(v) \, dv = (1, 0, 1 + t \sigma^2, 0, \mu + (24 + B_\ve) t + 12 \sigma^2 t^2) ),
\end{equation}
provided that $M_\ve$ follows assumption \eqref{norm} and where
\begin{equation*}
B_\ve := \frac{2}{\ve^2}\int_\RR M_\ve(v) v^4 \, dv.
\end{equation*}
We can see that the fourth order moment of the exact and approximate solutions differ at time $t > 0$ of a quantity equal to $B_\ve t$.
Keeping the same notations of Subsection \ref{subFinKinEn}, we will use this fact to compute the $d_3$ distance between the scaled approximate solution $h_\ve$ and the rescaled exact one $h$. Indeed, it is well know  that if $k \in \NN$, the $k^{th}$ moment of a distribution correspond to the
$k^{th}$ derivative of its Fourier transform. Then $\widehat{g}(\cdot, t)$ and $\widehat{g_\ve}(\cdot, t)$ are at least four times differentiable and according to Taylor's theorem \eqref{eqMomG}--\eqref{eqMomGeps} imply
\begin{align*}
  & \widehat{g}(\xi, t) = 1 + \frac{1+t \sigma^2}{2}\xi^2 + \frac{1 + 24 t + 12 \sigma^2 t^2}{24} \xi^4 + \mathcal{O}\left (|\xi|^5\right ), \\
  & \widehat{g_\ve}(\xi, t) = 1 + \frac{1+t \sigma^2}{2}\xi^2 + \frac{1 + (24 + B_\ve) t + 12 \sigma^2 t^2}{24} \xi^4 + \mathcal{O}\left (|\xi|^5\right ).
\end{align*}
Thus, one has for $R > 0$
\begin{align}
  d_3 \, (h_\ve(t), h(t)) & = \sup_{\xi \in \RR \setminus 0} \frac{1}{|\xi|^3} \left | \widehat{g_\ve} \left (\frac{\xi}{\sqrt{1+t}}, t \right ) - \widehat{g} \left (\frac{\xi}{\sqrt{1+t}}, t \right )\right | \notag \\
  & \leq \frac{2}{R^3} + \frac{B_\ve t}{24(1+t)^2} R. \label{ineqd3hepsh}
\end{align}
Optimizing \eqref{ineqd3hepsh} over $R$ and gathering the result with inequality \eqref{ineqdsExact} (with $s=3$) gives

\begin{theorem}
  \label{thmConvD3}
    {Let $0 \leq g_0 \in L^1_4$  satisfy \eqref{eqMomOrd4}. }
    If $g_\ve$ is solution to the associated Cauchy problem for Rosenau approximation \fer{Ros} with a kernel $M_\ve$ verifying \eqref{norm}, then the scaled distribution
    \[h_\ve(t, v) :=  V(t)^{-1} \, g_\ve \left ( V(t)^{-1} v, t\right )\]
    for  $V(t) := (1+t)^{-1/2}$ is such that, for all $t \ge 0$
    \begin{equation*}
    d_3 \left ( h_\ve(t), \omega_\sigma \right ) \leq \frac{1}{(1+t)^{3/2}} d_3(g_0, \omega_\sigma) +  \frac{13 \sqrt{2}}{24} (B_\ve)^{3/4} \left (\frac{\sqrt{t}}{1+t}\right )^{3/2},
    \end{equation*}
    where $\omega_\sigma$ is the Gaussian distribution \eqref{eqGaussFourier} and
    \begin{equation*}
    B_\ve := \frac{2}{\ve^2}\int_\RR M_\ve(v) v^4 \, dv.
    \end{equation*}
\end{theorem}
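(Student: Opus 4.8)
The plan is to mimic, one differentiability order higher, the argument used in Subsection~\ref{subFinKinEn}. Let $h(v,t):=V(t)^{-1}g(V(t)^{-1}v,t)$ be the self-similar rescaling of the exact heat solution $g$. Since $d_3$ is an ideal metric, the triangle inequality gives
\[
  d_3\big(h_\ve(t),\omega_\sigma\big)\ \le\ d_3\big(h(t),\omega_\sigma\big)\ +\ d_3\big(h_\ve(t),h(t)\big),
\]
and the first term is handled directly by Proposition~\ref{propAsymptEx} with $s=3$ (applicable since $g_0\in L^1_4\subset L^1_3$ and $d_3(g_0,\omega_\sigma)<\infty$ by the normalization~\eqref{eqMomOrd4}), which produces the first summand $(1+t)^{-3/2}d_3(g_0,\omega_\sigma)$ of the statement. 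Everything therefore comes down to estimating $d_3\big(h_\ve(t),h(t)\big)$, and this is where the four finite moments are used.

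For that term I would proceed as in~\eqref{eqMomG}--\eqref{ineqd3hepsh}. First establish the moment identities by testing the heat equation and~\eqref{Ros} against $\varphi(v)=v^k$, $k\le4$: using~\eqref{norm} together with $\int_\RR v^4M_\ve\,dv<\infty$ (which also propagates a finite fourth moment onto $g_\ve(\cdot,t)$), one finds that $g$ and $g_\ve$ share all moments up to order three, while their fourth moments differ by exactly $B_\ve t$ at time $t$. Since $g_0\in L^1_4$, the Fourier transforms $\widehat g(\cdot,t)$ and $\widehat g_\ve(\cdot,t)$ are four times differentiable near $\xi=0$, so $\widehat g_\ve(\cdot,t)-\widehat g(\cdot,t)$ vanishes to order three there with leading coefficient $\tfrac1{24}B_\ve t$; after the scaling $\xi\mapsto\xi/\sqrt{1+t}$ and using the trivial bound $|\widehat g_\ve-\widehat g|\le2$ for large $\xi$, one arrives, for any cutoff $R>0$, at~\eqref{ineqd3hepsh},
\[
  d_3\big(h_\ve(t),h(t)\big)\ \le\ \frac{2}{R^3}\ +\ \frac{B_\ve\,t}{24(1+t)^2}\,R .
\]
Minimizing the right-hand side over $R$ (the optimal choice being of order $\big(B_\ve t/(1+t)^2\big)^{-1/4}$) yields a bound of the form $C\,(B_\ve)^{3/4}\big(\sqrt t/(1+t)\big)^{3/2}$ with explicit constant $C$, and adding the two contributions proves the theorem.

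The step I expect to need genuine care is controlling $\widehat g_\ve(\cdot,t)-\widehat g(\cdot,t)$ by its $\xi^4$ term \emph{uniformly in $t$} on the window $|\xi|\le R$: under $L^1_4$ alone the remainder past the $\xi^4$ term is a priori only $o(|\xi|^4)$, not a bona fide $\mathcal O(|\xi|^5)$, so one must check that its contribution does not spoil the rate. I find it cleanest to avoid expanding $\widehat g_\ve(\cdot,t)$ directly and instead use $|\widehat g_\ve(\xi,t)-\widehat g(\xi,t)|\le t\,|A_\ve(\xi)-\sigma^2\xi^2|$ (valid since both exponents in~\eqref{eqGFourier} are nonnegative and real for an even kernel), together with the expansion $\widehat{M_\ve}(\xi)=\widehat M(\ve\xi)$ of the rescaled kernel: the fourth-order Taylor remainder of $1-\widehat M$, once multiplied by the prefactor $\lambda/\ve^2$, is $o(\ve^2\xi^4)=o(B_\ve\,\xi^4)$, hence small compared with the leading term $\tfrac1{24}B_\ve\xi^4$ on a neighborhood of the origin whose size is controlled in $\ve$; this moves the whole $t$-dependence outside and reduces the estimate to the one displayed above. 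For the two concrete kernels (Bernoulli and Rosenau) the symbol $A_\ve$ is in fact real-analytic, so this point is immediate. The remaining ingredients — finiteness of $d_3(h_\ve(t),h(t))$ (from the matching of moments up to order two), the cutoff split, and the one-variable optimization over $R$ — are routine.
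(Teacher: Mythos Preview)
Your proposal is correct and follows exactly the paper's route: triangle inequality plus Proposition~\ref{propAsymptEx} with $s=3$ for the first term, moment matching up to order four yielding the bound~\eqref{ineqd3hepsh}, and optimization over the cutoff $R$. Your additional care about the remainder beyond $\xi^4$ (and the alternative via $|A_\ve(\xi)-\sigma^2\xi^2|$) is a genuine refinement of a point the paper's $\mathcal O(|\xi|^5)$ notation glosses over, since under $L^1_4$ alone one only has $o(|\xi|^4)$; working through the symbol $A_\ve$ as you suggest isolates the $t$-dependence cleanly and makes the bound~\eqref{ineqd3hepsh} fully rigorous for general even kernels with four moments.
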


\begin{remark}
 The result of Theorem \ref{thmConvD3} indicates in a clear way that the rate of convergence towards the fundamental solution improves as soon as the initial value has a
 a higher number of moments equal to those of the Gaussian. In the present case, with four moments the difference in Fourier norm between solutions converges to zero at rate $(1+t)^{-3/4}$. This suggests that the rate of convergence approaches the optimal rate when the number of finite moments of the initial value approaches infinity. This aspect of the problem  is deeply connected to the analogous one studied in the central limit theorem, which is referred to as Berry-Esseen estimates.
 We remark that Berry-Esseen-like estimates for Fourier metric based metrics have been studied in \cite{Goudon:2002}.
\end{remark}

Let us  now specify the kernel $M_\ve$, to check if there are essential differences among the rate of convergence in $\ve$.

The choice of the central differences kernel
\begin{equation*}
M_\ve(v) = \frac{1}{2} [ \delta_0(v+ \ve \sigma) + \delta_0(v-\ve \sigma) ]
\end{equation*}
gives $B_\ve = 2 \ve^2 \sigma^4$. Then
\begin{equation*}
d_3 \left ( h_\ve(t), \omega_\sigma \right ) \leq \frac{1}{(1+t)^{3/2}} d_3(g_0, \omega_\sigma)+ C_1 \, \ve^{3/2} \left (\frac{\sqrt{t}}{1+t}\right )^{3/2},
\end{equation*}
where $C_1$ is a positive constant depending on $\sigma$. Compared to \eqref{ineqd2hepshCD},  the rate in $\ve$ is improved by a power $1/2$.

If we consider now the Rosenau kernel
\begin{equation*}
\widehat{M_\ve}(\xi) = \frac{1}{1+(\ve \sigma)^2 \xi^2},
\end{equation*}
$B_\ve = K \ve^3$ for a nonnegative constant $K$. Then
\begin{equation*}
d_3 \left ( h_\ve(t), \omega_\sigma \right ) \leq \frac{1}{(1+t)^{3/2}} d_3(g_0, \omega_\sigma) + C_2 \, \ve^{9/4} \left (\frac{\sqrt{t}}{1+t}\right )^{3/2},
\end{equation*}
where $C_2$ is a positive constant depending on $\sigma$.

\begin{remark}
{We can see here that the Rosenau kernel gives a better convergence rate in $\ve$ towards the exact solution, compared to the central differences kernel. This improvement also depends of the number of finite moments.}
This shows that the number of finite moments plays an essential role also in connection with the order of convergence in $\ve$. Indeed in formulas \eqref{ineqd2hepshCD} and \eqref{ineqd2hepshRos}, which refer to the situation in which only moments up to order two have been considered, this difference between the approximations due to the two kernels was not evident.
\end{remark}

\subsection{Strong Convergence of a ``Regularized'' Approximate Solution}

Let us assume that the kernel $M_\ve$ belongs to $L^1(\RR)$. The analysis of Section \ref{secAsympt} shows that, in a suitable scaling which allows to maintain the energy of the solution bounded, there is convergence in Fourier distance towards a Gaussian function. This result, however, cannot be directly used to conclude that the solution to the Rosenau kinetic equation converges towards the fundamental solution {to the heat equation}, as time goes to infinity.
In fact, since the $d_s$-metric is not scaling invariant, and the decay in time of the distance found in Theorem \ref{thmConvD3} is of order $t^{-3/4}$, by reverting to the original variables, the decay in time disappears.
Nevertheless, these weak convergence estimates can be fruitfully employed to prove that
the solution to Rosenau equation can be split into two parts, one of which is exponentially decaying to zero with respect to both time $t$ and the parameter $\ve$,
while the other converges strongly in time towards the fundamental solution to the heat equation at a lower rate. This result clarifies the nature of the Rosenau-type approximation to the heat equation. This approximation produces a solution which essentially consists of two parts, which differs for their regularity.
The part with lower regularity (essentially the first term in \eqref{eqGpowSerie} with the same regularity of the initial datum) is rapidly decaying to zero. The regular part is shown to behave like the classical solution to the heat equation, and decays towards the self-similar Gaussian solution for large times.
The result will be achieved by making a heavy use of the representations given in Section \ref{secRepr}. In particular, we will write the fundamental solution \fer{eqG1G2} by splitting it in two parts
\begin{align}
  \widehat{G_\ve}(\xi,t) & \, = \, e^{-\lambda \, t/\ve^2} \left ( e^{\widehat{M_\ve}(\xi)\lambda \, t / \ve^2} - \left (1- \widehat M_\ve(\xi)\right ) \right ) + \left (1- \widehat M_\ve(\xi) \right ) e^{-\lambda \, t/\ve^2} \notag \\
         & \, =: \widehat P_{\ve, reg}(\xi, t) + \left (1- \widehat M_\ve(\xi)\right ) e^{-\lambda \, t/\ve^2}. \label{eqPveReg}
\end{align}
As we shall see, this splitting separates in a natural way the singular part of the kernel from its \emph{regularized} part $P_{\ve, reg}$.

Let $g$ (respectively $g_{\ve, reg}$) a solution to the heat equation \eqref{heat} (resp. a solution to the Rosenau equation \eqref{kin} obtained by convolution with the regularized kernel). In other words
\begin{gather}
  g(v,t) \, = \, g_0(v) * \Omega_\sigma(\cdot, t), \notag \\
  g_{\ve, reg}(v,t) \, = \,  g_0(v) * P_{\ve, reg}(\cdot, t). \label{eqGveReg}
\end{gather}
We remark that, resorting to formula \eqref{eqGpowSerie}, one recovers that $ g_{\ve, reg}(v,t)$ coincides with the Wild sum expansion in which the zero-order term is substituted by the regular term $M_\ve* g_0(v)$.
Hence, $ g_{\ve, reg}(v,t)$ is nothing but a regularized version of the Wild sum expansion \eqref{eqGpowSerie}. Since the initial datum $g_0(v)$ is of unit mass,
\begin{align*}
  \left \|g(t) -  g_{\ve, reg}(t)\right \|_{L^1} = & \ \int_\RR \left | \int_\RR \left ( \Omega_\sigma(v-w, t) - P_{\ve, reg}(v-w, t) \right )\, g_0(w) \, dw \right | dv\\
   \leq & \ \left \|\Omega_\sigma(t) - P_{\ve, reg}(t)\right \|_{L^1}.
\end{align*}
On the other hand, if $\lambda = \sigma^2$
\begin{align*}
  \left| \widehat P_{\ve, reg}(\xi,t) - \widehat \Omega_\sigma(\xi, t)\right| = & \ \left| e^{-\sigma^2 \, t/\ve^2} \left ( e^{\widehat{M_\ve}(\xi)\, \sigma^2 \, t / \ve^2}- \left (1- \widehat M_\ve(\xi)\right ) \right ) - e^{-\sigma^2|\xi|^2 t}\right| \\
   \leq & \ \left| e^{-\sigma^2 \, t/\ve^2}  e^{\widehat{M_\ve}(\xi)\, \sigma^2 \, t / \ve^2}- e^{-\sigma^2|\xi|^2 t}\right| + \left|1- \widehat M_\ve(\xi) \right|e^{-\sigma^2 \, t/\ve^2},
\end{align*}
and this implies
 \be\label{ok}
d_2\left (P_{\ve, reg}(t), \Omega_\sigma(t)\right ) \le d_2\left (G_\ve(t), \Omega_\sigma(t)\right ) + d_2\left (M_\ve, \delta_0\right )e^{-\sigma^2 \, t/\ve^2}.
 \ee
By means of the scaling property of the Fourier metric $d_s$,  formula \fer{ok} ensures that, after the scaling $\xi \to \xi/\sqrt{1+t}$, the convergence results (in scaled variables) of Section \ref{secAsympt} guarantee the convergence of the regularized kernel towards the Gaussian fundamental solution.

Now, thanks to the regularity of $P_{\ve, reg}(t)$,  we can consider also convergence of the (regularized) approximate fundamental solution $ P_{\ve, reg}$ towards the heat kernel $\Omega_\sigma $ in stronger norms, typically $L^1$, which are invariant with respect to dilatation. Actually, it is enough to deal with $L^2$ norms. Indeed, it is well known \cite{Carlen:1999, Carrillo:2007} that if $f \in L^2(\RR)$ is of finite kinetic energy, there exists an explicit, nonnegative constant $C_1$ such that
\begin{equation}
  \label{ineqL2toL1}
  \|f\|_{L^1} \leq C_1 \, \|f\|_{L^2}^{4/5}\left ( \int_\RR |v|^2 \, |f(v)| \, dv \right )^{1/5}.
\end{equation}
Note that the second moment of the quantities involved is uniformly bounded only in the scaled variables used in Section \ref{secAsympt}. In original variables, in fact, the second moment of the solution, both of the heat equation and its Rosenau approximation, is increasing linearly with respect to time.
Then, it is possible to interpolate the $L^2$ norm by some appropriate Fourier and homogeneous Sobolev norms.
More precisely, it was shown in \cite{Carlen:1999} that for all $r \in (0,1)$ there exists an explicit, nonnegative constant $C_2=C_2(r)$ such that
\begin{equation}
  \label{ineqDsToL2}
  \|f-g\|_{L^2} \leq C_2 \, d_2\,(f,g)^{2(1-r)} \, \|f-g\|_{\dot H^N}^{2 r},
\end{equation}
where $N = 3 \,(1-r)/r$. Then, according to inequalities \eqref{ineqL2toL1}--\eqref{ineqDsToL2}--\eqref{ok} and to the results of Subsection \ref{subFourth}, we will obtain the strong convergence of $ P_{\ve, reg}(t, \cdot)$ towards $\Omega_\sigma(t, \cdot) $ when $t \to \infty$ if this function belongs to the homogeneous Sobolev space $\dot H^N(\RR)$ for arbitrarily small $N$, and at the same time its Sobolev norm increases in time at a rate such that the right-hand side of inequality \fer{ineqDsToL2} decays to zero as time goes to infinity.

We will deal with the Rosenau kernel with $\lambda = \sigma^2$
\begin{equation*}
  \widehat{M_\ve}(\xi) = \frac{1}{1+(\ve \sigma)^2 \xi^2}
\end{equation*}
which belongs to $\dot H^s(\RR)$ for all $s < 1$. Hence, in scaled variables, for all $s <1$
\begin{align*}
  e^{-\sigma^2 \, t/\ve^2} \left  \| \widehat{M_\ve} \left (\frac\xi{\sqrt{1+t}}\right ) \right \|_{\dot H^s}^{2} & \, = \, e^{-\sigma^2 \, t/\ve^2}\left( \frac{\sqrt{1+t}}{\ve \sigma}\right)^{1+s} \left \| \widehat{M_\ve} (\xi)\right \|_{\dot H^s}^{2} \\
  & \, \le \, C(\ve) \| \widehat{M_\ve}(\xi)\|_{\dot H^s}^{2}.
\end{align*}

According to \eqref{eqPveReg}, it remains to check the smoothness of
\begin{equation*}
  \widehat{\mathcal G_\ve}(\xi, t) :=  e^{\widehat{M_\ve}(\xi)\,\sigma^2 \, t / \ve^2} - 1, \ \forall t \geq 0.
\end{equation*}
Let us set $s < 1$. We have thanks to \emph{de l'Hospital} rule
\begin{align*}
  \lim_{|\xi| \to \infty} \xi^{s+1} \widehat{\mathcal G_\ve}(\xi, t) 
   & \, = \, \lim_{|\xi| \to \infty}  - \frac{\sigma^2 \, t}{(s+1) \, \ve^2} \, \xi^{s+2} \, \widehat{M_\ve}'(\xi) \, e^{\widehat{M_\ve}(\xi)\,\sigma^2 \, t / \ve^2} \\
   & \, = \, \lim_{|\xi| \to \infty} \frac{2 \, \sigma^4 \, t}{s+1} \, \frac{\xi^{s+3}}{(1+(\ve \sigma)^2 \xi^2)^2} \exp \left (  \frac{\sigma^2 \, t}{\ve^2 \, (1+(\ve \sigma)^2 \xi^2)}\right ) = 0.
\end{align*}
Then we have proved that $\xi^s \, \widehat{\mathcal G_\ve}(\xi, t) = o(\xi^{-1}) $ which means according to Riemann criterion that $ \mathcal G_\ve(\cdot, t) \in \dot H^s(\RR)$ for any $t \geq 0$. Detailed computations, reported in Appendix, allow to conclude that, for $s<1$,
 \be\label{diff}
e^{-\sigma^2 \, t/\ve^2} \left \| \widehat{\mathcal G_\ve} \left (\frac\xi{\sqrt{1+t}}\right ) \right \|_{{\dot H^s}} \le C(\ve, s)\,(1+ t)^\delta
 \ee
where the explicitly computable constant $C(\ve, s)$ depends on $\ve$, $s$ and $\widehat M_\ve$, and $\delta >0$ can be taken as small as we want.
Thanks to inequalities \eqref{ok}--\eqref{ineqL2toL1}--\eqref{ineqDsToL2}--\fer{diff} and Theorem \ref{thmConvD3} we obtain the following

\begin{theorem}\label{main1}
  Under the assumptions of Theorem \ref{thmConvD3}, if $g$ is a solution to the heat equation \eqref{heat} and $g_{\ve, reg}$ is given by \eqref{eqGveReg} for the Rosenau kernel \eqref{Max}, then one has
  \begin{equation*}
    \lim_{t \to \infty} \left \|g(t) -  g_{\ve, reg}(t)\right \|_{L^1} = 0.
  \end{equation*}
\end{theorem}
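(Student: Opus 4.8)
The plan is to bound $\|g(t)-g_{\ve,reg}(t)\|_{L^1}$ by the $L^1$-distance between the two fundamental solutions $\Omega_\sigma(\cdot,t)$ and $P_{\ve,reg}(\cdot,t)$, to pass to the self-similar scaling $\xi\mapsto\xi/\sqrt{1+t}$ used throughout Section~\ref{secAsympt}, and then to interpolate the $L^1$-norm of the (rescaled) difference between the Fourier metric $d_2$, which decays after rescaling thanks to \eqref{ok} and the results of Section~\ref{secAsympt}, and a small-index homogeneous Sobolev norm $\dot H^N$, whose growth in time is the content of \eqref{diff}.

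First, since $g(t)-g_{\ve,reg}(t)=g_0*\bigl(\Omega_\sigma(\cdot,t)-P_{\ve,reg}(\cdot,t)\bigr)$ and $\|g_0\|_{L^1}=1$, one has $\|g(t)-g_{\ve,reg}(t)\|_{L^1}\le\|\Omega_\sigma(t)-P_{\ve,reg}(t)\|_{L^1}$, so only the fundamental solutions need to be controlled. The $L^1$-norm is invariant under the dilation $f\mapsto V(t)^{-1}f\bigl(V(t)^{-1}\cdot\bigr)$, so I would work with the rescaled functions $\widetilde\Omega(t)$ and $\widetilde P_\ve(t)$, obtained from $\Omega_\sigma(t)$ and $P_{\ve,reg}(t)$ by the change of variable $\xi\mapsto\xi/\sqrt{1+t}$ in Fourier. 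Applying the interpolation \eqref{ineqL2toL1} to $f=\widetilde\Omega(t)-\widetilde P_\ve(t)$, the kinetic-energy factor is harmless: the second moments of $\Omega_\sigma(t)$ and of $P_{\ve,reg}(t)$ grow at most linearly in $t$ — for $P_{\ve,reg}$ this follows from the representations \eqref{eqGpowSerie}--\eqref{eqG1G2}, which moreover show that $\widehat{P}_{\ve,reg}(\cdot,t)=G_1(\cdot,t)+e^{-\sigma^2 t/\ve^2}\widehat{M_\ve}$, whence $P_{\ve,reg}(\cdot,t)\ge 0$ is a genuine probability density — so after the $V(t)$-scaling these second moments are bounded uniformly in $t$ (for fixed $\ve$). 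Hence
\[
\|\Omega_\sigma(t)-P_{\ve,reg}(t)\|_{L^1}=\|\widetilde\Omega(t)-\widetilde P_\ve(t)\|_{L^1}\le C(\ve)\,\|\widetilde\Omega(t)-\widetilde P_\ve(t)\|_{L^2}^{4/5},
\]
and everything reduces to showing $\|\widetilde\Omega(t)-\widetilde P_\ve(t)\|_{L^2}\to0$ as $t\to\infty$.

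For this I would apply \eqref{ineqDsToL2} to $\widetilde\Omega(t)-\widetilde P_\ve(t)$ with a parameter $r\in(3/4,1)$, so that $N=3(1-r)/r\in(0,1)$. The $d_2$-factor decays: by the dilation property of $d_s$ and by \eqref{ok},
\[
d_2\bigl(\widetilde\Omega(t),\widetilde P_\ve(t)\bigr)=\frac{1}{1+t}\,d_2\bigl(\Omega_\sigma(t),P_{\ve,reg}(t)\bigr)\le\frac{1}{1+t}\Bigl(d_2\bigl(G_\ve(t),\Omega_\sigma(t)\bigr)+d_2(M_\ve,\delta_0)\,e^{-\sigma^2 t/\ve^2}\Bigr)\le\frac{C(\ve)}{1+t},
\]
where $d_2(M_\ve,\delta_0)$ is an explicit constant, and $d_2(G_\ve(t),\Omega_\sigma(t))$ is bounded uniformly in $t$ because $A_\ve(\xi)$ agrees with the heat symbol $\sigma^2\xi^2$ to leading order (condition \eqref{norm} with matched diffusion coefficient), so that $|\widehat{G_\ve}(\xi,t)-\widehat{\Omega_\sigma}(\xi,t)|/\xi^2\lesssim t\,\xi^2 e^{-c\xi^2 t}$ near the origin and is exponentially small away from it; this is also a consequence of the scaled estimates behind Theorem~\ref{thmConvD3}. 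For the $\dot H^N$-factor I would use $\widehat{P}_{\ve,reg}=G_1+e^{-\sigma^2 t/\ve^2}\widehat{M_\ve}$ together with the triangle inequality: after the $V(t)$-rescaling, the Gaussian part $\widetilde\Omega(t)$ contributes an $O(1)$ bound, the term $e^{-\sigma^2 t/\ve^2}\widehat{M_\ve}(\cdot/\sqrt{1+t})$ is bounded (in fact decaying) since $\widehat{M_\ve}\in\dot H^N$ for $N<1$, and the remaining piece — the inverse transform of $G_1$, equal to $e^{-\sigma^2 t/\ve^2}\mathcal G_\ve$ — is controlled precisely by \eqref{diff}, so that $\|\widetilde\Omega(t)-\widetilde P_\ve(t)\|_{\dot H^N}\le C(\ve,N)\,(1+t)^\delta$ with $\delta>0$ arbitrarily small. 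Substituting into \eqref{ineqDsToL2},
\[
\|\widetilde\Omega(t)-\widetilde P_\ve(t)\|_{L^2}\le C(\ve)\,(1+t)^{-2(1-r)+2r\delta},
\]
which tends to $0$ as soon as $\delta<(1-r)/r$; combined with the reduction of the previous paragraph this gives $\|g(t)-g_{\ve,reg}(t)\|_{L^1}\to0$.

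The only genuinely technical input is the Sobolev growth estimate \eqref{diff} for $\mathcal G_\ve$, which is carried out separately (in the Appendix); granting it, the remaining difficulty is purely the bookkeeping of scaling exponents — one must take $r$, hence $N$, close enough to $1$ and then $\delta$ small enough that the product of the $d_2$-decay $(1+t)^{-2(1-r)}$ and the admissible Sobolev growth $(1+t)^{2r\delta}$ keeps a strictly negative exponent, which is exactly why $N$ has to be taken arbitrarily small and why the $L^1$-rate obtained this way is (necessarily) only slowly decaying.
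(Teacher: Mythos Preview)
Your argument is correct and follows essentially the same route as the paper: reduce to the fundamental solutions, pass to the self-similar scaling, and combine the interpolation chain \eqref{ineqL2toL1}--\eqref{ineqDsToL2} with the $d_2$-decay coming from \eqref{ok} and the Sobolev growth bound \eqref{diff}. The only minor difference is that you obtain the uniform bound on $d_2(G_\ve(t),\Omega_\sigma(t))$ directly from the structure of $A_\ve(\xi)$ for the Rosenau kernel (which is sharper than the scaled estimate \eqref{ineqd2hepshRos} the paper points to), but this does not change the scheme of the proof.
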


\begin{remark}
Theorem \ref{main1} has various consequences. Coupling the decay of the $d_2$ metric obtained in \fer{ineqd2hepshCD} with inequalities \fer{ineqL2toL1} and \fer{ineqDsToL2}, the regularity of $P_{\ve, reg}$ implies a rate of decay
of the $L^1$-norm smaller or equal to $(1+t)^{1/15}$. On the other hand, the singular part of the fundamental solution $\widehat{G_\ve}(\xi,t)$ converges to zero exponentially both in time and with respect to $\ve$. Hence, after a transitory time, there is prevalence of the regular part of the fundamental solution, which decays to zero in $L^1$ at a suboptimal rate.
Hence, the main difference between the large-time behavior of the heat equation and its Rosenau approximation consists in a slower rate of convergence of the latter towards the Gaussian fundamental solution.
The method of proof also indicates that the rate of convergence in $L^1$ is strictly linked to the regularity of the distribution of the background. For this reason, the argument leading to Theorem \ref{main1} can not be applied to the central difference approximation.
Maybe a different approach will help to clarify if, in some weaker norm, a result similar to that of Theorem \ref{main1} holds also for the central difference type approximation.
\end{remark}

\section{Conclusions}

We studied in this article the validity of the approximation to the linear diffusion equation proposed by Rosenau \cite{Rosenau:1992} as a regularized version of the Chapman-Enskog expansion of hydrodynamics.
This approximation essentially is realized by substituting the heat equation with a linear kinetic equation of Boltzmann type, describing collisions of particles with a fixed background.
This remark allows to consider the Rosenau approximation as a particular realization of a model Boltzmann equation, in which the background distribution is a general probability density with bounded variance.
In addition to the Rosenau distribution, we considered in this article also a point masses background, which furnishes the central difference scheme to solve the linear diffusion equation.
The main differences between the action of the two different backgrounds have been studied in some details. In particular, our analysis put into evidence that the approximation with a regular kernel is a good approximation, {in that it possesses most of the typical properties of the heat equation, including the same large-time behavior, apart from a slower rate of decay towards the fundamental solution and a finite speed of propagation at high Fourier frequencies.}
We were not able to prove an analogous property for the non regular approximation, for which the rate of decay towards the {equilibrium} distribution has been only proven in scaled variables.
The results of the present paper shall lead to new numerical approximations of the whole Rosenau approximation \eqref{Rose}. 
For instance, Wild sums have been successfully used in connection with the approximation of the bilinear Boltzmann equation (see \eg \cite{PR:2001}) to develop a new family of Monte Carlo methods, which possess high order accuracy in time,  and are asymptotic preserving (with respect to $\ve$).
Following the same strategy, our next goal is to develop an efficient Monte Carlo method to compute the Rosenau approximation using our analysis on the linear Wild sums.

\begin{appendix}

\section{Growth in Time of the Sobolev Norm of the Regularized Kernel}

For any given $s<1$, we investigate the growth in time of the quantity
 \[
B_s(t) = e^{-\sigma^2 \, t/\ve^2} \left \| \widehat{\mathcal G_\ve} \left
(\frac\xi{\sqrt{1+t}}\right ) \right \|_{{\dot H^s}}.
 \]
  To avoid inessential heavy notations, in what follows we fix $\ve=\sigma =
1$. In this case
 \begin{equation*}
   B_s(t) = e^{-t} \left[ \int_\RR |\xi|^{2 s} \left\{\exp\left( \frac t{1 + (1+t)^{-1}\xi^2} \right) -1  \right]^2 \, d\xi \right\}^{1/2}.
 \end{equation*}
By a simple change of variable into the integral, we can rewrite $B_s(t)$ in the equivalent form
\be
  \label{me}
  B_s(t) = (1+t)^{s+1/2} \, I_s(t)^{1/2} = (1+t)^{s+1/2}\left\{ \int_\RR |\xi|^{2s} \left[ \exp \left  ( -t \frac{\xi^2}{1+\xi^2}\right ) - \exp(-t)  \right]^2 \, d\xi \right\}^{1/2}.
\ee
The function $I_s(t)$ defined in \fer{me} satisfies the differential equation
 \be\label{def}
 \frac d{dt}I_s(t) = -2 I_s(t) + 2 \int_\RR A_s(\xi,t) \, d\xi,
 \ee
where
 \begin{equation*}
   A_s(\xi,t) = \frac{|\xi|^{2 s}}{1+\xi^2} \exp \left  ( -t \frac{\xi^2}{1+\xi^2}\right ) \left[ \exp \left  ( -t \frac{\xi^2}{1+\xi^2}\right ) - \, \exp(-t)  \right].
 \end{equation*}
For a given positive constant $\delta < 2p+1$, let us set
 \[
 \alpha =  1 - \frac {\delta}{2p +1}
 \]
Then we find
 \be\label{bo1}
 \int_0^{(1+t)^{-\alpha}} A_s(\xi,t)\, d\xi \le \int_0^{(1+t)^{-\alpha}} |\xi|^{2 s}\, d\xi = \frac 1{2 s+1} \frac{(1+t)^\delta}{(1+t)^{2 s+1}}.
 \ee
If now $|\xi| > (1+t)^{-\alpha}$ we have
 \[
\frac{\xi^2}{1+\xi^2} \ge \frac{(1+t)^{-2\alpha}}{1+(1+t)^{-2\alpha}} = \frac 1{1+(1+t)^{2\alpha}} \ge \frac 12 {(1+t)^{-2\alpha}},
 \]
and
\begin{align*}
  \exp \left  ( -t \frac{\xi^2}{1+\xi^2}\right ) -\, \exp(-t) = \frac{ \exp \left  ( -t \frac{\xi^2}{1+\xi^2}\right ) - \, \exp(-t) } {-t \frac{\xi^2}{1+\xi^2} + t} \frac t{1 +\xi^2}  \le \frac t{1 +\xi^2}.
\end{align*}
The previous inequalities imply
\begin{align}
  \int_{(1+t)^{-\alpha}}^{+\infty} A_s(\xi,t)\, d\xi & \le t \exp \left (- t\, (1+t)^{-2\alpha}/2\right ) \int_{(1+t)^{-\alpha}}^{+\infty} \frac{|\xi|^{2 s}}{(1+\xi^2)^2} \, d\xi \notag \\
  & \le c_s \, t \exp \left (- t\, (1+t)^{-2\alpha}/2\right ),  \label{bo2}
\end{align}
where we denoted by $c_s$ the bounded constant
 \[
c_s = \int_\RR \frac{|\xi|^{2 s}}{(1+\xi^2)^2} \, d\xi.
 \]
Putting together inequalities \fer{bo1} and \fer{bo2} we conclude with the upper bound
 \be\label{bo3}
 \bar A_s(t) := \int_{\xi \in \RR} A_s(\xi,t) \, d\xi \le \frac 1{2 s+1} \frac{(1+t)^\delta}{(1+t)^{2 s+1}} +  c_s \,t \exp \left (- t\, (1+t)^{-2\alpha}/2\right ).
 \ee
Hence, considering that the second term in \fer{bo3} decay faster at
infinity than the first one, for any given constant $a>0$ there
exists a bounded constant $C_a$ such that if $t >  a$
 \be\label{bo4}
\bar A_s(t) \le C_a t^{-(2s +1 -\delta)}.
 \ee
Substituting inequality \fer{bo4} into \fer{def}, we obtain, for a given $a>0$, and $t >a$
\begin{align*}
  I_s(t) & \le I_s(a) \, e^{-2t} + e^{-2t} \int_a^t \bar A_s(\tau)\, e^{2 \tau} \, d\tau \\
    & \le I_s(a) \, e^{-2t} + C_a \,  e^{-2t} \int_a^t \tau^{-(2s +1 -\delta)} e^{2 \tau}\, d \tau.
\end{align*}
On the other hand, integration by parts shows that
 \[
 \int_a^t \tau^{-(2s +1 -\delta)} e^{2 \tau}\, d\tau \le \frac {e^{2 t}}2 t^{-(2 s +1 -\delta)} +
 \frac {2 s +1 -\delta}{2a} \int_a^t \tau^{-(2 s +1 -\delta)} e^{2 \tau}\,
 d\tau.
 \]
Choosing $a$ in such a way that
 \[
 \frac {2s +1 -\delta}{2a} \le \frac 12,
  \]
we conclude that, for $t >a$
  \[
    e^{-2t} \int_a^t \tau^{-(2 s +1 -\delta)} e^{2 \tau}\, d \tau \le  t^{-(2 s +1-\delta)},
  \]
  which implies
 \be\label{okk}
I_s(t) \le I_s(a) e^{-2t} +  C_a  t^{-(2s +1-\delta)}.
 \ee
Using \fer{okk} into \fer{me} gives
 \be\label{dec1}
B_s(t) = (1+t)^{s+1/2}I_s(t)^{1/2} \le \sqrt{I_s(a)}(1+t)^{s+1/2} e^{-t} + (1+t)^{\delta/2}\sqrt{C_a}.
 \ee
Since $\delta$ is arbitrarily small, inequality \fer{diff} follows.

\end{appendix}

\section*{Acknowledgment}

  This work was initiated during a visit of Thomas Rey, which thanks the University of Pavia for its grateful hospitality.
  The research of TR was granted by the European Research Council ERC Starting Grant 2009, project 239983-NuSiKiMo.
  GT acknowledges support by MIUR project ``Optimal mass transportation, geometrical and functional inequalities with applications''.

\bibliographystyle{acm}
\bibliography{biblio}

\begin{thebibliography}{10}

\bibitem{arnold:2008}
{\sc Arnold, A., Carrillo, J.~A., and Klapproth, C.}
\newblock Improved entropy decay estimates for the heat equation.
\newblock {\em J. Math. Anal. Appl. 343}, 1 (2008), 190--206.

\bibitem{Bartier:2011}
{\sc Bartier, J.-P., Blanchet, A., Dolbeault, J., and Escobedo, M.}
\newblock {Improved intermediate asymptotics for the heat equation}.
\newblock {\em Applied Mathematics Letters 24}, 1 (Jan. 2011), 76--81.

\bibitem{Besse:2010}
{\sc Besse, C., and Goudon, T.}
\newblock {Derivation of a Non-Local Model for Diffusion Asymptotics —
  Application to Radiative Transfer Problems}.
\newblock {\em Commun. Comput. Phys. 8}, 5 (2010), 1139--1182.

\bibitem{Carlen:1999}
{\sc Carlen, E., Gabetta, E., and Toscani, G.}
\newblock {Propagation of Smoothness and the Rate of Exponential Convergence to
  Equilibrium for a Spatially Homogeneous Maxwellian Gas}.
\newblock {\em Communications in Mathematical Physics 199}, 3 (Jan. 1999),
  521--546.

\bibitem{Carrillo:1998}
{\sc Carrillo, J.~A., and Toscani, G.}
\newblock Exponential convergence toward equilibrium for homogeneous
  {F}okker-{P}lanck-type equations.
\newblock {\em Math. Methods Appl. Sci. 21}, 13 (1998), 1269--1286.

\bibitem{Carrillo:2007}
{\sc Carrillo, J.~A., and Toscani, G.}
\newblock Contractive probability metrics and asymptotic behavior of
  dissipative kinetic equations.
\newblock {\em Riv. Mat. Univ. Parma (7) 6\/} (2007), 75--198.

\bibitem{Cercignani:1988}
{\sc Cercignani, C.}
\newblock {\em {The Boltzmann Equation and its Applications}}.
\newblock Springer, 1988.

\bibitem{colombo:2007}
{\sc Colombo, R.~M., and Guerra, G.}
\newblock Hyperbolic balance laws with a non local source.
\newblock {\em Comm. Partial Differential Equations 32}, 10-12 (2007),
  1917--1939.

\bibitem{colombo:2008}
{\sc Colombo, R.~M., and Guerra, G.}
\newblock {Hyperbolic balance laws with a dissipative non local source}.
\newblock {\em Communications on Pure and Applied Analysis 7}, 5 (June 2008),
  1077--1090.

\bibitem{Gabetta:1995}
{\sc Gabetta, E., Toscani, G., and Wennberg, B.}
\newblock {Metrics for probability distributions and the trend to equilibrium
  for solutions of the Boltzmann equation}.
\newblock {\em J. Stat. Phys. 81}, 5-6 (Dec. 1995), 901--934.

\bibitem{Goudon:2002}
{\sc Goudon, T., Junca, S., and Toscani, G.}
\newblock {Fourier-based distances and Berry-Esseen like inequalities for
  smooth densities}.
\newblock {\em Monatsh. Math. 135}, 2 (2002), 115--136.

\bibitem{Goudon:2011}
{\sc Goudon, T., and Parisot, M.}
\newblock {On the Spitzer–H\"{a}rm Regime and Nonlocal Approximations:
  Modeling, Analysis, and Numerical Simulations}.
\newblock {\em Multiscale Model. Simul. 9}, 2 (Apr. 2011), 568--600.

\bibitem{Liu:2001}
{\sc Liu, H., and Tadmor, E.}
\newblock {Critical Thresholds in a Convolution Model for Nonlinear
  Conservation Laws}.
\newblock {\em SIAM Journal on Mathematical Analysis 33}, 4 (Jan. 2001),
  930--945.

\bibitem{PR:2001}
{\sc Pareschi, L., and Russo, G.}
\newblock {Time relaxed Monte Carlo methods for the Boltzmann equation}.
\newblock {\em SIAM J. Sci. Comp. 23}, 4 (2001), 1253--1273.

\bibitem{Rohde:2005}
{\sc Rohde, C.}
\newblock {On local and non-local Navier-Stokes-Korteweg systems for
  liquid-vapour phase transitions}.
\newblock {\em ZAMM 85}, 12 (Dec. 2005), 839--857.

\bibitem{Rosenau:1992}
{\sc Rosenau, P.}
\newblock {Tempered diffusion: A transport process with propagating fronts and
  inertial delay}.
\newblock {\em Physical Review A 46}, 12 (1992), 12--15.

\bibitem{Schochet:1992}
{\sc Schochet, S., and Tadmor, E.}
\newblock {The Regularized Chapman-Enskog Expansion for Scalar Conservation
  Laws}.
\newblock {\em Arch. Ration. Mech. Anal. 119}, 2 (1992), 95--107.

\bibitem{Toscani:1996}
{\sc Toscani, G.}
\newblock Kinetic approach to the asymptotic behaviour of the solution to
  diffusion equations.
\newblock {\em Rend. Mat. Appl. (7) 16}, 2 (1996), 329--346.

\bibitem{Wild:1951}
{\sc Wild, E.}
\newblock {On Boltzmann's equation in the kinetic theory of gases}.
\newblock {\em Mathematical Proceedings of the Cambridge Philosophical Society
  47}, 03 (Oct. 1951), 602--609.

\end{thebibliography}
\end{document}